\documentclass[11pt]{amsart} 
\usepackage[a4paper, margin=3cm]{geometry}
\usepackage{enumerate,amsmath,amssymb,amsthm,bbm,url}

\theoremstyle{plain}
\newtheorem{Theorem}{Theorem}
\newtheorem{Lemma}{Lemma}
\newtheorem{Proposition}{Proposition}
\theoremstyle{definition}
\newtheorem{Definition}{Definition}
\theoremstyle{remark}
\newtheorem{Remark}{Remark}

\newcommand{\set}[1]{\left\{#1\right\}}
\newcommand{\norm}[1]{\left\lVert#1\right\rVert}
\newcommand{\ind}[1]{\mathbbm{1}_{#1}}
\newcommand{\abs}[1]{\left\vert#1\right\vert}

\DeclareMathOperator{\Log}{Log}

\DeclareMathOperator{\sign}{sign}
\newcommand{\R}{\mathbb{R}}
\newcommand{\prob}{\mathrm{P}}
\newcommand{\ex}[2][]{\mathrm{E}_{#1}\left[\,#2\,\right]}
\newcommand{\eps}{\varepsilon}
\begin{document}

\author{Yuri Kondratiev \and Yuliya Mishura \and Georgiy Shevchenko}
\title[Limit theorems for additive functionals of CTRW]{Limit theorems for additive functionals\\ of continuous time random walks}

\begin{abstract}
For a  continuous-time random walk $X=\set{X_t,t\ge 0}$ (in general non-Markov), we study the asymptotic behavior, as $t\rightarrow \infty$, of the normalized additive functional $c_t\int_0^{t} f(X_s)ds$, $t\ge 0$. Similarly to the Markov situation, assuming that the distribution of jumps of $X$ belongs to the domain of attraction to $\alpha$-stable law with $\alpha>1$, we establish the convergence  to the local time at zero of an $\alpha$-stable L\'evy motion. We further study a situation where $X$ is delayed by a random environment given by the Poisson shot-noise potential:
$\Lambda(x,\gamma)= e^{-\sum_{y\in \gamma} \phi(x-y)},$
where $\phi\colon\mathbb R\to [0,\infty)$ is a bounded function decaying sufficiently fast, and $\gamma$ is a homogeneous Poisson point process, independent of $X$. We find that in this case the weak limit has both ``quenched'' component depending on $\Lambda$, and a component, where $\Lambda$ is ``averaged''. 
\end{abstract}

\keywords{Continuous-time random walk; additive functional; domain of attraction of stable law; $\alpha$-stable  L\'evy motion; local time; random environment; Poisson shot-noise potential}
\subjclass[2010]{60G50, 60J55, 60F17}

\maketitle

\section{Introduction}

An evolution of continuous-time random walk (CTRW) $X=\set{X_t,t\ge 0}$ is  described by a sequence of times between consecutive jumps of the process, which are assumed to be independent identically distributed (iid) positive random variables $\theta_n$, $n\ge 1$, and by a sequence of iid sizes of jumps $\xi_n$, $n\ge 1$; the two sequences are assumed to be independent. When the distribution of $\theta_n$ is exponential, CTRW is nothing but a compound Poisson process. Otherwise, CTRW is in general not a Markov process, so may be considered as a non-Markovian generalization of a compound Poisson process. 

It is handful to represent the CTRW $X$ in the form
\begin{equation}\label{eq:ctrw}
X_t = \sum_{k=1}^{N_t} \xi_k,
\end{equation}
where $N_t = \max\set{k\ge 0: \sum_{i=1}^k \theta_i \le t}$ is the number of jumps up to time $t$. (Throughout the paper we use the convention that $\sum_{k=1}^{0}=0.$)

Consider a function $f\colon \R\to \R$. We are interested in the asymptotic behavior, as $t\rightarrow \infty$, of the additive functional $\int_0^t f(X_s)ds$, normalized by a suitable factor.

When $X$ is a discrete- or continuous-time ergodic Markov process having an invariant probability measure $\nu$, additive functionals of the form $A_t = \sum_{i=1}^t f(X_i)$ (respectively $A_t=  \int_0^t f(X_s)ds$) with $f\in L^1(\nu)$ satisfy strong law of large numbers: $A_t/t \to \nu(f):=\int f(x)\nu(dx)$, $t\to\infty$, almost surely, and, under some additional assumptions, a central limit theorem: $\bigl(A_t - \nu(f)t\bigr)/\sqrt{t}\overset{d}{\longrightarrow}{N(0,\sigma^2_f)}, t\to\infty$, with some variance $\sigma^2_f$ (see e.g.\ \cite[Chapter 2]{eberle}).

The situation is very different when $X$  does not have an invariant probability measure, in particular, when it is a random walk. In this case, under suitable normalization, additive functionals converge to a local time of some $\alpha$-stable L\'evy motion multiplied by the integral of $f$ (or, in the case of lattice random walk, by the sum of its values at the lattice points) (see \cite{Borodin,Jeganathan}). It is also worth to mention works \cite{kartashov-kulik,kulik}, where a general result on convergence of additive functionals of Markov processes is proved, and \cite{jacod}, which studies convergence to local times and associated central limit theorems for additive functionals of diffusions. 

There are also results in the non-Markovian case. Most notably, \cite{Jeganathan} studies cumulative sums $S_k = \sum_{i=1}^k X_i$ of some long-memory stationary sequences $X$ of moving-average time, and establishes convergence of normalized additive functionals to the local time of fractional Brownian motion or, in a heavy-tail situation, of a fractional $\alpha$-stable process (it is also worth mentioning that this article establishes some of the strongest results for the Markovian situation as well).

To the best of our knowledge, the asymptotic behavior of additive functionals for a CTRW has not been studied yet in the literature. We are focusing on the case where the times between jumps are integrable. In this case, despite the corresponding CTRW is possibly a non-Markovian, the results are similar to the Markovian case. The reason is that the process $N_t$ grows approximately linearly,  thanks to the law of large numbers; the corresponding results are contained in Section 3. In Section 4, we consider a quite different situation where the process $X$ is delayed by some environment $\Lambda$. We first study the case of non-random $\Lambda$, and prove a corresponding limit theorem. Further we look at a random environment given by the Poisson shot noise potential
$$
\Lambda(x,\gamma)= e^{-\sum_{y\in \gamma} \phi(x-y)},
$$
where $\gamma$ is a homogeneous Poisson configuration, and $\phi\colon\R\to [0,\infty)$ is bounded and integrable. We establish a limit theorem for this case as well. The  convergence we show is ``quenched'' in the sense that we have a weak convergence to a limit depending on $\gamma$ for almost all configurations $\gamma$. Another interesting feature is that the limit, besides the aforementioned ``quenched'' component, contains a component, where $\Lambda$ is ``averaged''. 

The remaining structure of the article is following: Section 2 contains some preliminary information on domains of attraction and stable variables, and proofs, which are rather technical, are postponed to Appendix. 

\section{Preliminaries}
For any random variable $X$, we denote by $\varphi_{X}(\lambda)=\ex{e^{i\lambda X}}$ its characteristic function. If $X$ has absolutely continuous distribution, $f_X$ denotes its density. Throughout the proofs, $C$ is a generic constant (possibly random), the value of which is not important and may change between lines. To emphasize dependence on some variables, we put them in subscripts: $C_p, C_k$ etc. The symbols $\overset{d}{\longrightarrow}$ and $\overset{fdd}{\longrightarrow}$ designate the convergence in law and the convergence of finite dimensional distributions, respectively.
\subsection{Domains of attraction}

Consider the basic definitions concerning the random variables $\{\xi_n,n\ge 1\}$, for details see \cite[Chapter XVII]{feller2} and \cite{zolotarev}.

\begin{Definition}
A random variable $\xi$ is said to have a stable distribution with index $\alpha \in (1,2]$ if its characteristic function has the form
\begin{equation*}
\varphi_{\xi}(x)=\exp\left\{iax-c|x|^\alpha \omega(x,\alpha,\beta)\right\}, \quad c>0,\quad a\in R,~\alpha\in(1,2],
\end{equation*}
where $\omega(x,\alpha,\beta)=1+i\beta\sign x\tan\left(\frac{\pi\alpha}{2}\right)$; $c>0$ is called the scale parameter, $\beta\in[-1,1]$ is called the skewness parameter, $a\in \R$ is the expected value. 
\end{Definition}
\begin{Definition}\label{def-stable}
The distribution $\mathcal L$ is said to belong to the domain of attraction to stable law with index $\alpha\in (1,2]$ if there exist some sequence $a_n\in \R$ and a slowly varying function $L$ such that the normalized sums $$\frac{\xi_1+\dots +\xi_n}{L(n)n^{1/\alpha}} -a_n$$ of iid random variables $\{\xi_n,n\ge 1\}$ with distribution $\mathcal L$ converge, as $n\to\infty$, to a stable distribution with index $\alpha$.
\end{Definition}
\begin{Definition}If in Definition \ref{def-stable} $L(n)=\sigma$ for some constant $\sigma>0$, we say that $\mathcal L$ belongs to the domain of normal attraction to stable law with index $\alpha\in(1,2]$.\end{Definition}

A distribution $\mathcal L$ belongs to a domain of attraction of a stable law with index $\alpha$ if its characteristic function admits in some neighborhood of $0$ an expansion of the form
\begin{equation}\label{character2}
\varphi_{\mathcal L}(x) = \exp\bigl\{iax-  h(|x|)|x|^\alpha\omega(x,\alpha,\beta)\bigr\},
\end{equation}
where $h(x)$ is a function slowly varying at 0; it belongs to the domain of normal attraction to a stable law with index $\alpha\in(1,2]$ if $h(x)\to c>0$, $x \to 0$. The relation between $h$ and $L$ is as follows:
$$\frac1{L(n)n^{1/\alpha}}=\inf\left\{x>0: x^\alpha h(x)=\frac{1}{\alpha n} \right\}, n\ge 1.$$

\section{Asymptotic behavior of additive functionals for CTRW}

In this section we study asymptotic behavior of additive functionals of the form $\int_0^t f(X_t)dt$  for CTRW  $X$ given by \eqref{eq:ctrw}. We will need several assumptions concerning the distribution of jumps $\xi_n$ and times between them $\theta_n$ as well as function $f$. 

\begin{itemize}
\item[\textbf{A1.}] We will assume that the jump sizes $\xi_n$, $n\ge 1$, are centered and their distribution  belongs to the domain of attraction to $\alpha$-stable law with $\alpha\in(1,2]$. In this case (see e.g. \cite[Proposition 3.4]{resnick}) there is also a functional convergence
$$
\Biggl\{\frac{1}{L(n)n^{1/\alpha}}\sum_{k=1}^{[nt]} \xi_k,t\ge 0\Biggr\} \overset{d}{\longrightarrow} \set{Z_\alpha(t), t\ge 0}
$$
towards an $\alpha$-stable L\'evy motion $Z_\alpha(t)$. 
\item[\textbf{A2.}] The assumptions on function $f$ come from \cite{Jeganathan} and are accompanied by additional assumptions on the distribution of $\xi_1$, namely, we assume that
\begin{itemize}
\item[(i)] either $f\in L^1(\R)\cap L^\infty(\R)$ and  the distribution of $\xi_1$ has a nonzero absolutely continuous component,
\item[(ii)] or $f\in L^1(\R)\cap L^2(\R)$ and the characteristic function $\varphi_{\xi_1}$  of jump sizes is integrable to some power $p>0$: 
$$
\int_{-\infty}^\infty \abs{\varphi_{\xi_1}(t)}^p dt<\infty. 
$$
\end{itemize}
\item[\textbf{A3.}] Concerning the times $\theta_n$ between jumps, we will assume that they are integrable:
$$
\ex{\theta_1} = \mu.
$$
\end{itemize}

Denote $c_t = L(t)t^{1/\alpha-1}$, $S_n = \xi_1+\dots + \xi_n$.  We will use the following result, which is an adaptation of Theorem 3 from \cite{Jeganathan} for the case $x=0$, $\beta_n = y_n$, $c_0=1$, $c_j=0$, $j\ge 1$ (in the terms of \cite{Jeganathan}).

\begin{Theorem}[\cite{Jeganathan}]\label{thm:jeganathan}
Under assumptions {\rm A1--A2}, the finite-dimensional distributions of the process $$
c_n \sum_{k=1}^{[nt]} f(S_k),\quad  t\ge 0,
$$
converge to those of 
$$
\int_{-\infty}^\infty f(x)dx\cdot  \ell_\alpha(t,0),\quad t\ge 0,
$$
where $\ell_\alpha(t,0)$ is the symmetric local time at zero of the $\alpha$-stable L\'evy motion $Z_\alpha$ on $[0,t]$. 
\end{Theorem}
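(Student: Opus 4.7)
The statement is quoted from \cite{Jeganathan}, so the plan is not to reprove it from scratch but to specialize Jeganathan's Theorem~3 to the present setting. Jeganathan establishes finite-dimensional convergence of weighted partial sums of the form $\sum_{k=1}^{[nt]}\sum_j c_j f(S_{k+j}-x)$ to a constant times the local time $\ell_\alpha(t,x)$ of the limiting $\alpha$-stable L\'evy motion. Choosing $x=0$, $c_0=1$ and $c_j=0$ for $j\ge 1$ collapses the double sum to $\sum_{k=1}^{[nt]} f(S_k)$ and evaluates the local time at the origin, which is exactly what the theorem asserts.

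It then remains to verify that A1--A2 supply the hypotheses Jeganathan requires. A1 provides the domain-of-attraction condition on the jump law together with the functional convergence $(L(n)n^{1/\alpha})^{-1}\sum_{k=1}^{[nt]}\xi_k \overset{d}{\longrightarrow} Z_\alpha(t)$, which is the scaling input Jeganathan applies to $S_k$. The two alternatives in A2 match Jeganathan's twin regularity conditions: case (i) combines an absolutely continuous component in the jump law with $f\in L^1(\R)\cap L^\infty(\R)$, while case (ii) combines $\varphi_{\xi_1}\in L^p$ with $f\in L^1(\R)\cap L^2(\R)$ to enable a Fourier-analytic argument giving uniform control on the densities of $S_n$ near zero. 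The normalization $c_n=L(n)n^{1/\alpha-1}$ is exactly the rate at which the empirical occupation measure of the rescaled walk converges to the occupation density of $Z_\alpha$; the prefactor $\int_{-\infty}^{\infty} f(x)\,dx$ appears as the total mass of $f$ against this occupation density, since on the macroscopic scale $f$ acts essentially as a narrow bump and contributes only through its integral.

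The only genuine technical issue, and hence the main obstacle, is to confirm that the symmetric local time $\ell_\alpha(t,0)$ used here coincides, up to multiplicative convention, with the version arising naturally from Jeganathan's limit (which is formulated via a slightly different parametrization). Once this bookkeeping is done and our $c_n$-normalization is identified with the $\beta_n=y_n$-normalization in Jeganathan's statement, the theorem follows by direct specialization, with no additional probabilistic work required.
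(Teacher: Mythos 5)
Your proposal matches the paper exactly: the paper does not prove this statement but imports it as a specialization of Theorem~3 of \cite{Jeganathan} with $x=0$, $\beta_n=y_n$, $c_0=1$, $c_j=0$ for $j\ge 1$, which is precisely the reduction you describe, and A2 is lifted verbatim from Jeganathan's hypotheses. No further commentary is needed.
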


Now we establish a similar result for the CTRW $X$ defined by \eqref{eq:ctrw}.
\begin{Theorem}\label{thm:main}
Let $X$ be given by \eqref{eq:ctrw}. Under assumptions {\rm A1--A3}, the finite-dimensional distributions of the process
$$
c_t\int_0^{tu} f(X_s)ds,\quad  u\ge 0,
$$
converge as $t\to+\infty$ to those of 
$$
\mu^{1/\alpha}\int_{-\infty}^\infty f(x)dx\cdot  \ell_\alpha(u,0),\quad u\ge 0.
$$
\end{Theorem}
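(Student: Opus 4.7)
The plan is to reduce $\int_0^{tu} f(X_s)\,ds$ to a sum of the form $\sum f(S_k)$ and invoke Theorem~\ref{thm:jeganathan}. Since $X_s=S_{N_s}$ is constant on each interval $[T_{k-1},T_k)$, with $T_k=\theta_1+\dots+\theta_k$, I first write
\[
\int_0^{tu} f(X_s)\,ds = \sum_{k=1}^{N_{tu}}\theta_k f(S_{k-1}) + f(S_{N_{tu}})(tu - T_{N_{tu}}).
\]
The boundary term is dominated by $|f(S_{N_{tu}})|\,\theta_{N_{tu}+1}$; the overshoot $\theta_{N_{tu}+1}$ is tight by the key renewal theorem (in case~(ii) of A2 one also uses uniform boundedness of the density of $S_n$ for large $n$, which follows from $|\varphi_{\xi_1}|^p\in L^1$), so multiplication by $c_t\to 0$ makes it negligible.

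For the main term, split $\sum\theta_k f(S_{k-1})=\mu\sum f(S_{k-1})+\sum(\theta_k-\mu)f(S_{k-1})$. Setting $n_t=\lfloor t/\mu\rfloor$, the strong law for the renewal process gives $N_{tu}/n_t\to u$ a.s., and slow variation of $L$ yields $c_t\sim\mu^{1/\alpha-1}L(n_t)n_t^{1/\alpha-1}$. Combined with a random time-change argument (splitting $f=f_+-f_-$ so that each partial sum process is monotone, then using the a.s.\ continuity of $u\mapsto\ell_\alpha(u,0)$ together with a Skorohod representation to promote fdd convergence to uniform convergence on compacts for each monotone piece), Theorem~\ref{thm:jeganathan} gives
\[
\mu c_t\sum_{k=1}^{N_{tu}} f(S_{k-1}) \overset{d}{\longrightarrow}\mu^{1/\alpha}\int_{-\infty}^{\infty}f(x)\,dx\cdot\ell_\alpha(u,0),
\]
which is the claimed limit.

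The principal obstacle is the error $E_t:=c_t\sum_{k=1}^{N_{tu}}(\theta_k-\mu)f(S_{k-1})$; since only $\ex{\theta_1}<\infty$ is assumed, a second-moment bound is unavailable and I would proceed by truncation. Fix $K>0$ and decompose $\theta_k-\mu=(\theta_k\wedge K-\mu_K)+((\theta_k-K)_+-\tilde\mu_K)$ with $\mu_K=\ex{\theta_1\wedge K}$, $\tilde\mu_K=\mu-\mu_K$. Conditional on $(\xi_k)_{k\ge 1}$, the truncated piece forms a square-integrable martingale in $n$ with bracket bounded by $K^2\sum_{k=1}^n f(S_{k-1})^2$; stopping at the $(\theta_k)$-stopping time $N_{tu}$ and applying Theorem~\ref{thm:jeganathan} to $f^2\in L^1$ (valid in both subcases of A2) shows its contribution is $o_{\mathrm P}(1)$. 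For the tail, independence of $\{k\le N_{tu}\}$ from $(\theta_k-K)_+$ after conditioning on $(\xi_k)$, combined with the local limit theorem estimate $\ex{|f(S_k)|}=O((L(k)k^{1/\alpha})^{-1})$, gives
\[
\ex{\Big|c_t\sum_{k=1}^{N_{tu}}(\theta_k-K)_+ f(S_{k-1})\Big|}\le \ex{(\theta_1-K)_+}\cdot\ex{c_t\sum_{k=1}^{N_{tu}}|f(S_{k-1})|}\le C\ex{(\theta_1-K)_+},
\]
uniformly in $t$, which vanishes as $K\to\infty$. A standard $\varepsilon/3$-argument (letting $K\to\infty$ after $t\to\infty$) completes the one-dimensional case, and the finite-dimensional version follows identically by invoking the fdd version of Theorem~\ref{thm:jeganathan}.
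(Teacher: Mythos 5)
Your overall architecture (same decomposition into $\sum_{k\le N_{tu}}\theta_k f(S_{k-1})$ plus a boundary term, then reduction to Theorem~\ref{thm:jeganathan}) matches the paper, but your treatment of the core difficulty --- the randomness of the $\theta_k$'s --- is genuinely different and, as written, has gaps. The paper never splits $\theta_k=\mu+(\theta_k-\mu)$; instead it conditions on the $\xi$'s and computes the conditional characteristic function $\prod_k\varphi_{\theta_1}\bigl(\lambda c_n f(S_{k-1})\bigr)$, using only the first-order expansion $\varphi_{\theta_1}(t)=1+i\mu t+o(t)$. The error is then controlled pathwise by $\eps\abs{\lambda}c_n\sum_k\abs{f(S_{k-1})}$, which is tight by Theorem~\ref{thm:jeganathan} applied to $\abs{f}$. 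This requires nothing beyond $\ex{\theta_1}<\infty$ and no moment bounds on $f(S_k)$ whatsoever; unbounded $f$ is then handled by a separate truncation of $f$ (not of $\theta$) via Billingsley's Theorem~3.2. Your route forces you into exactly the moment estimates that the paper's conditioning trick is designed to avoid.

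Concretely, three steps of your argument do not hold as stated. First, the claim that $\set{k\le N_{tu}}$ is independent of $(\theta_k-K)_+$ after conditioning on $(\xi_k)$ is false: $N_{tu}$ is built entirely from the $\theta$'s, and $\set{k\le N_{tu}}=\set{\theta_1+\dots+\theta_k\le tu}$ involves $\theta_k$ itself. (This is repairable by enlarging to $\set{\tau_{k-1}\le tu}$, which is independent of $\theta_k$, at the cost of one extra summand.) Second, invoking Theorem~\ref{thm:jeganathan} for $f^2$ is not licensed under A2(ii): that case requires $f^2\in L^1\cap L^2$, i.e.\ $f\in L^4$, which is not assumed; and under A2(i) the theorem applies to $f^2$ but you still need to pass from a conditional-variance bound $K^2c_t^2\sum_{k\le N_{tu}+1}f(S_{k-1})^2$ at the random (non-stopping) time $N_{tu}$ to a probability statement, which needs an argument. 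Third, the bound $\ex{\abs{f(S_k)}}=O\bigl((L(k)k^{1/\alpha})^{-1}\bigr)$ is asserted as ``the local limit theorem estimate'' but is not free: under A2(i) the law of $\xi_1$ is only required to have a nonzero absolutely continuous component, so $S_k$ need not have a (bounded) density and this estimate is a nontrivial lemma of Borodin--Ibragimov/Jeganathan type; under A2(ii) it holds only for $k$ large enough that $\abs{\varphi_{\xi_1}}^k$ is integrable, and $\ex{\abs{f(S_k)}}$ may even be infinite for small $k$, so the uniform-in-$t$ expectation bound for the tail term needs the first few summands split off and handled in probability. None of these is fatal to the strategy, but each must be filled in; the paper's characteristic-function argument sidesteps all three.
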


\begin{Remark}
Using the results of \cite{Borodin}, it is possible to replace the additional assumptions from A2 on the distribution of $\xi$ by the requirement to be non-lattice. However, in this case $f$ should have a compactly supported Fourier transform, which is a very restrictive requirement. 
\end{Remark}

\begin{Remark}
The results of \cite{Borodin} can be used to handle the lattice case. Namely, let A1 and A3 hold, but A2 is replaced by the assumption that $\xi_1(\Omega) = \set{a+b\mathbb{Z}}$ for some $a\in \mathbb{R}, b>0$, and $\sum_{n=-\infty}^\infty |f(a+bn)|<\infty$.  Then 
$$
\set{c_t\int_0^{tu} f(X_s)ds,u\ge 0} \overset{fdd}{\longrightarrow}  \set{\mu^{1/\alpha} b \sum_{n=-\infty}^\infty f(a+bn) \cdot  \ell_\alpha(u,0),u\ge 0}, t\to+\infty.
$$
\end{Remark}

\section{CTRW in a random environment}

\subsection{CTRW with location-dependent intensity of jumps}
Consider now the situation that the time between jumps depends on the current location of the random walker: the intensity of jumps from a location $x$ is $\Lambda(x)>0$. 
In the Markovian case, the corresponding evolution is a pure jump process with the generator
$$(\mathcal A \psi)(x) = \Lambda(x)\int_{\R} \bigl(\psi(x-y)-\psi(x)\bigr) F_{\xi_1}(dy).$$
The consecutive locations visited by the random walker $X$ are, as before, $S_1 = \xi_1, S_2 = \xi_1+\xi_2, \dots, S_n = \sum_{k=1}^n\xi_k, \dots$  The time spent in the $n$th location is an exponential random variable with parameter $\lambda(S_n)$, which also can be written as $\theta_n/\lambda(S_n)$, where $\theta_n$ is an exponential random variable with parameter 1. In view of independence of times between jumps, the random variables $\theta_n$, $n\ge 1$, are independent,
so the evolution can be written in the form
\begin{equation}\label{eq:ctrw-lambda}
X_t = \sum_{k=1}^{N_t} \xi_k,
\end{equation}
where $N_t = \max\set{k\ge 0: \sum_{i=1}^k \theta_i/\Lambda(S_i) \le t}$. To construct a non-Markovian counterpart of this dynamic, we now drop the requirement that the variables $\theta_n$, $n\ge 1$, have exponential distribution. So in the rest of this section $X$ will be given by \eqref{eq:ctrw-lambda} with iid jumps $\xi_n$, $n\ge 1$, and iid variables $\theta_n$, $n\ge 1$, which are also independent of $\xi$.

In this section we will need stronger assumptions than in the previous one. Namely, we will assume that the jump sizes are from the \textit{normal} domain of attraction of $\alpha$-stable law. Moreover, since the case $\alpha = 2$ is very different technically, we will consider in this section only non-Gaussian case $\alpha\in (1,2)$. We will also need stronger assumptions on the distribution of jumps.

\begin{itemize}
\item[\textbf{B1.}] The jump sizes $\xi_n$, $n\ge 1$, are centered and their distribution  belongs to the normal domain of attraction to $\alpha$-stable law with $\alpha\in(1,2)$, i.e.\ $L(n) = \sigma>0$ in Definition~\ref{def-stable}. In this case (see \cite[Proposition 3.4]{resnick}) there is  a functional convergence
$$
\Biggl\{\frac{1}{\sigma  n^{1/\alpha}}\sum_{k=1}^{[nt]} \xi_k,t\ge 0\Biggr\} \overset{d}{\longrightarrow} \set{Z_\alpha(t), t\ge 0}
$$
towards an $\alpha$-stable L\'evy motion $Z_\alpha(t)$. 

\item[{\textbf{B2}.}] The distribution of $\xi_n$ is absolutely continuous with 
$$
\int_{-\infty}^\infty x^2 \abs{f_{\xi_1}(x) - f_{Z_\alpha}(x)} dx<\infty. 
$$
\end{itemize}

Concerning the jump intensity $\Lambda$ we will assume sub-polynomial growth and existence of Cezaro averages for its inverse. 

\begin{itemize}
\item [{\rm\textbf{B3.}}]
For any $\delta>0$, $\sup_{|x|\le n}\Lambda(x)^{-1} = o(n^\delta), n\to \infty$. 
\item [{\rm\textbf{B4.}}] There exists $\overline{\Lambda^{-1}} > 0$ such that for some $r>\alpha$,
$$
\sup_{|x|\le t^r} \abs{\frac{1}{t} \int_x^{x+t} \Lambda(y)^{-1}dy - \overline{\Lambda^{-1}}}\to 0, t \to +\infty.
$$
\end{itemize}

We start by examining the properties of the sums $\sum_{i=1}^n \theta_i/\Lambda(S_i)$ and the process $N_t$.

\begin{Proposition}\label{prop:N_t}
Under the assumptions {\rm A3, B1--B4}, 
$$
\frac{1}{n}\sum_{i=1}^n \frac{\theta_i}{\Lambda(S_i)}\overset{\prob}{\longrightarrow} \mu \overline{\Lambda^{-1}}, n\to \infty,
$$
and 
$$
\frac{N_t}{t} \overset{\prob}{\longrightarrow} \frac{1}{\mu \overline{\Lambda^{-1}}}, t\to\infty. 
$$
\end{Proposition}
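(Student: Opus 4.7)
The plan is to prove the two convergences in sequence, the second following from the first by a renewal-style inversion. For the first statement, I would condition on $\mathcal F^{\xi}=\sigma(\xi_n,n\ge 1)$, which by independence of $\theta$ and $\xi$ renders the summands $\theta_i/\Lambda(S_i)$ conditionally independent with conditional mean $\mu/\Lambda(S_i)$. Since A3 only furnishes $\theta_i\in L^1$, I would split $\theta_i=\theta_i\ind{\theta_i\le M}+\theta_i\ind{\theta_i>M}$: on the high-probability event $\set{\max_{i\le n}|S_i|\le n^{1/\alpha+\eta}}$ (from B1) the weights $\Lambda(S_i)^{-1}$ are at most $n^{\delta}$ for any $\delta>0$ by B3, so an elementary $L^2$ law handles the bounded part while the tail is controlled in $L^1$ using $\ex{\theta_1\ind{\theta_1>M}}\to 0$ as $M\to\infty$. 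This reduces the first claim to
$$
\frac{1}{n}\sum_{i=1}^n \frac{1}{\Lambda(S_i)}\overset{\prob}{\longrightarrow}\overline{\Lambda^{-1}}.
$$

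For this reduced claim I would compute the mean and variance of the left-hand side. For the mean, $\ex{\Lambda(S_i)^{-1}}=\int \Lambda^{-1}(y)\,p_i(y)\,dy$, where the density $p_i$ exists by B2 and a local limit theorem in the normal domain of attraction (in the $L^\infty$ form granted by the $x^2$-moment bound in B2) gives $p_i(y)\approx(\sigma i^{1/\alpha})^{-1}g_\alpha(y/(\sigma i^{1/\alpha}))$. Partitioning $\R$ into blocks of length $t=i^{\gamma}$ with $\gamma<1/\alpha$ chosen so that $t^r$ dominates the effective range $i^{1/\alpha+\eta}$ of the walk (possible because $r>\alpha$), applying B4 on each block, and summing with the smooth weights supplied by $g_\alpha$, one obtains $\ex{\Lambda(S_i)^{-1}}\to\overline{\Lambda^{-1}}$ as $i\to\infty$, so the Cesaro mean has the same limit. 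For the variance I would estimate $\mathrm{Cov}(\Lambda(S_i)^{-1},\Lambda(S_j)^{-1})$ using independence of increments: writing $(S_i,S_j)=(S_i,S_i+S'_{j-i})$ with $S'_{j-i}$ an independent copy of $S_{j-i}$, the mixed expectation becomes a double integral $\int\int\Lambda^{-1}(y)\Lambda^{-1}(y+z)p_i(y)p_{j-i}(z)\,dy\,dz$, to which the same block-and-B4 argument applies; this yields $\mathrm{Var}(\frac{1}{n}\sum\Lambda(S_i)^{-1})=o(1)$ and hence $L^2$-convergence.

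For the second statement, let $T_n=\sum_{i=1}^n\theta_i/\Lambda(S_i)$, so by definition $T_{N_t}\le t<T_{N_t+1}$. Since $T_n\to+\infty$ almost surely, $N_t\to+\infty$ in probability as $t\to\infty$. Composing the first convergence $T_n/n\overset{\prob}{\to}\mu\overline{\Lambda^{-1}}$ with the random index $N_t$ yields $T_{N_t}/N_t\overset{\prob}{\to}\mu\overline{\Lambda^{-1}}$ and likewise for $T_{N_t+1}/(N_t+1)$; sandwiching $t$ between these two renewal times produces $t/N_t\overset{\prob}{\to}\mu\overline{\Lambda^{-1}}$, which upon inversion is the announced limit.

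The delicate step is the mean-variance analysis of $\frac{1}{n}\sum\Lambda(S_i)^{-1}$: one must reconcile the averaging scale in B4 (whose uniformity requires the centre $|x|$ to stay below $t^r$) with the natural dispersion scale $n^{1/\alpha}$ of the stable-domain walk, and handle the transition region at small $i$ where neither the local limit theorem nor the block decomposition is fully in force. The hypothesis $r>\alpha$ is used precisely here, to supply a block-length exponent $\gamma$ that works uniformly across $i\le n$.
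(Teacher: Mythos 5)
Your overall route is viable but genuinely different from the paper's in its two main steps. For eliminating the $\theta$'s, the paper works with characteristic functions: it expands $\Log\varphi_{\theta_1}(t)=i\mu t+o(t)$ and reduces everything to the single convergence $\frac1n\sum_k\Lambda(S_k)^{-1}\overset{\prob}{\longrightarrow}\overline{\Lambda^{-1}}$ plus a remainder controlled on the event $\{\max_{k\le n}\Lambda(S_k)^{-1}\le n\delta/|\lambda|\}$, whose complement is handled by a union bound and the stable tail of $\xi_1$; your conditional truncation achieves the same reduction, though note that with only A3 a fixed truncation level $M$ forces you to bound the tail part by $\ex{\theta_1\ind{\theta_1>M}}\cdot\ex{\frac1n\sum_i\Lambda(S_i)^{-1}}$, so you need finiteness of $\ex{\Lambda(S_i)^{-1}}$ (which does follow from B3 and $\ex{|S_i|^\delta}<\infty$ for small $\delta$) \emph{before} the mean analysis is complete. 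For the reduced claim, the paper does not compute variances at all: it truncates $h=\Lambda^{-1}$ at level $n^b$, observes that $n^{-1/\alpha}S_k$ is a Markov approximation of $Z_\alpha$ in the sense of Kulik, and invokes the Kartashov--Kulik theorem on additive functionals, so that only the uniform convergence of the means $\frac1n\sum_k\ex{h_n(S_k+x)}$ must be verified; that verification uses the $L^1$ local limit theorem of Banys/Basu and a layer-cake decomposition over the level sets $[a_z,b_z]$ of the unimodal stable density, with $r>\alpha$ guaranteeing $k^{1/\alpha}(b_z-a_z)\ge n^{b/r}$ so that H2 applies. Your block decomposition plays exactly the role of their layer-cake step, and your covariance estimate replaces the Markov-approximation machinery; this buys self-containedness at the price of having to prove the mean convergence \emph{uniformly in the shift} $y=S_i$ (needed for $\ex{\Lambda(y+S'_{j-i})^{-1}}\to\overline{\Lambda^{-1}}$), which is precisely what the $\sup_{|x|\le t^r}$ in B4 is for.

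Two points need repair. First, B2 together with the results of Banys/Basu yields convergence of the densities of $k^{-1/\alpha}S_k$ in the $L^1$ metric (with rate $o(k^{1-2/\alpha})$), not in $L^\infty$; since $\Lambda^{-1}$ is unbounded, you must first replace it by its truncation at level $n^b$ (as the paper does, using B3 and the moment bound $\ex{|S_k|^a}$ to control the error) before the $L^1$ density convergence can be paired with the sup-norm of the truncated function. Second, convergence in probability does not compose with a random index: from $T_n/n\overset{\prob}{\longrightarrow}\mu\overline{\Lambda^{-1}}$ and $N_t\to\infty$ in probability you cannot directly conclude $T_{N_t}/N_t\overset{\prob}{\longrightarrow}\mu\overline{\Lambda^{-1}}$, because $N_t$ is built from the $T_n$ themselves. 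Replace the sandwich by the standard inversion the paper uses: for $x<(\mu\overline{\Lambda^{-1}})^{-1}$,
\begin{equation*}
\prob(N_t\le tx)=\prob\Bigl(T_{[tx]}\ge t\Bigr)=\prob\Bigl(\tfrac{1}{[tx]}T_{[tx]}\ge \tfrac{t}{[tx]}\Bigr)\to 0,
\end{equation*}
and symmetrically for $x>(\mu\overline{\Lambda^{-1}})^{-1}$. With these adjustments your argument goes through.
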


Finally we turn to asymptotics of the additive functional. 

\begin{Theorem}\label{thm:main-lambda}
Let $X$ be given by \eqref{eq:ctrw-lambda}. Under assumptions {\rm A2--A3} on $f(x) = \frac{g(x)}{\Lambda(x)}$ and {\rm B1--B4}, the finite-dimensional distributions of the process
$$
\sigma t^{1/\alpha-1}\int_0^{tu}g(X_s)ds,\quad  u\ge 0,
$$
converge as $t\to+\infty$ to those of 
$$
\mu^{1/\alpha} \cdot\bigl( \overline{\Lambda^{-1}}\bigr)^{1/\alpha-1}\cdot \int_{-\infty}^\infty \frac{g(x)}{\Lambda(x)}dx\cdot  \ell_\alpha(u,0),\quad u\ge 0.
$$
\end{Theorem}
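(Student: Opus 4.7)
The proof naturally follows the three-step scheme of Theorem \ref{thm:main}, with extra technical care needed because the sojourn times depend on the current location through $\Lambda$. My starting point would be the decomposition
\begin{equation*}
\int_0^{tu} g(X_s)\,ds = \sum_{k=1}^{N_{tu}} g(S_{k-1})\frac{\theta_k}{\Lambda(S_k)} + g(S_{N_{tu}})\bigl(tu - T_{N_{tu}}\bigr),
\end{equation*}
where $T_k = \sum_{i=1}^k \theta_i/\Lambda(S_i)$. The boundary term is bounded by $\|g\|_\infty \cdot \theta_{N_{tu}+1}/\Lambda(S_{N_{tu}+1})$; using B1 to get a polynomial bound on $|S_{N_{tu}+1}|$ and B3 to control $\Lambda^{-1}$, this term is $o_\mathbb{P}(t^{1-1/\alpha})$ and therefore disappears after multiplication by $\sigma t^{1/\alpha - 1}$.

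The heart of the argument is to reduce the remaining sum to $\mu \sum_{k=1}^{N_{tu}} f(S_k)$ with $f = g/\Lambda$, since this is the object handled by Theorem \ref{thm:jeganathan}. I would first split $\theta_k = \mu + (\theta_k - \mu)$: the centered piece contributes $\sum_k (g(S_{k-1})/\Lambda(S_k))(\theta_k - \mu)$, which is a martingale with respect to the filtration generated by all jumps and holding times up to step $k$. Its conditional variance, of order $\sum_k (g(S_{k-1})/\Lambda(S_k))^2$, can be controlled via an $L^2$ analogue of the local-time estimate in Theorem \ref{thm:jeganathan} together with the polynomial bound from B3, and turns out to be of smaller order after scaling by $\sigma t^{1/\alpha-1}$. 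It then remains to pass from $g(S_{k-1})/\Lambda(S_k)$ to $g(S_k)/\Lambda(S_k)$, which is where assumptions B2 and B4 enter decisively.

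With the reduction in place, Theorem \ref{thm:jeganathan} applied to $f = g/\Lambda$ gives
\begin{equation*}
\sigma n^{1/\alpha - 1}\sum_{k=1}^{[nu]} f(S_k)\overset{fdd}{\longrightarrow} \int_{-\infty}^\infty f(x)\,dx\cdot \ell_\alpha(u,0),
\end{equation*}
and Proposition \ref{prop:N_t} provides $n(t) := \lfloor t/(\mu\overline{\Lambda^{-1}})\rfloor$, so that $N_{tu}/n(t) \to u$ in probability. Since $t/n(t) \to \mu\overline{\Lambda^{-1}}$, tracking constants yields
\begin{equation*}
\sigma t^{1/\alpha-1}\cdot \mu = \sigma n(t)^{1/\alpha-1}\cdot \mu\,(\mu\overline{\Lambda^{-1}})^{1/\alpha-1}(1+o(1)) = \sigma n(t)^{1/\alpha-1}\cdot \mu^{1/\alpha}(\overline{\Lambda^{-1}})^{1/\alpha-1}(1+o(1)),
\end{equation*}
producing the announced limiting constant.

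The chief obstacle is the reduction $g(S_{k-1})/\Lambda(S_k)\leadsto g(S_k)/\Lambda(S_k)$: the two summands are evaluated at positions separated by a full $\alpha$-stable jump $\xi_k$, so pointwise comparison is hopeless and one must instead show that the summed discrepancy is negligible after scaling. Intuitively, assumption B4 replaces the non-averaged $\Lambda^{-1}$ along the trajectory by the Cezaro average $\overline{\Lambda^{-1}}$, while B2 (comparing $f_{\xi_1}$ to the stable density $f_{Z_\alpha}$) ensures the error rate in this averaging is uniformly controlled on the window $|x|\le t^r$ visited by the walk. These quantitative estimates are technical and, as the authors indicate, are deferred to the Appendix.
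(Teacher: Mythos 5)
Your overall architecture is the right one — decompose the integral over holding intervals, apply Theorem~\ref{thm:jeganathan} to $f=g/\Lambda$, use Proposition~\ref{prop:N_t} for the asymptotics of $N_{tu}$, and track the constant $\mu^{1/\alpha}(\overline{\Lambda^{-1}})^{1/\alpha-1}$ (your final constant computation is correct). However, two of your steps do not go through as described. First, handling the $\theta_k$'s by writing $\theta_k=\mu+(\theta_k-\mu)$ and bounding the conditional variance of the martingale part requires $\ex{\theta_1^2}<\infty$, which A3 does not give you: only $\ex{\theta_1}=\mu$ is assumed. The paper's proof of Theorem~\ref{thm:main} deliberately avoids second moments by conditioning on the $S_k$'s, computing the characteristic function of $\sum_k\theta_k x_k$, and using only the first-order expansion $\Log\varphi_{\theta_1}(t)=i\mu t+o(t)$, together with a truncation argument for unbounded $f$; that argument is what must be repeated here. (Your bound on the boundary term via $\norm{g}_{L^\infty(\R)}$ has the same flavor of over-assuming: in case A2(ii) the function $g/\Lambda$ need not be bounded.)

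Second, and more importantly, the step you single out as the chief obstacle — passing from $g(S_{k-1})/\Lambda(S_k)$ to $g(S_k)/\Lambda(S_k)$ — is an artifact of an off-by-one in the indexing and is not actually present. Since the jump intensity from location $x$ is $\Lambda(x)$, the sojourn at $S_{k-1}$ has length $\theta_k/\Lambda(S_{k-1})$, so the integral of $g(X_s)$ over that interval is exactly $\theta_k\,g(S_{k-1})/\Lambda(S_{k-1})=\theta_k f(S_{k-1})$ with $f=g/\Lambda$: the arguments of $g$ and of $\Lambda$ coincide by construction, which is precisely why the theorem imposes A2 on $g/\Lambda$ rather than on $g$, and why the decomposition has exactly the form of \eqref{decompos}. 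Consequently you also misattribute the role of B2 and B4: they play no part in any such reduction; they are used only inside Lemma~\ref{lem:sum1/lambda} (B2 through the $L^1$ local limit theorem for the densities, B4 as the uniform Ces\`aro-average hypothesis) to prove $\frac1n\sum_k\Lambda(S_k)^{-1}\overset{\prob}{\longrightarrow}\overline{\Lambda^{-1}}$ and hence $N_t/t\overset{\prob}{\longrightarrow}(\mu\overline{\Lambda^{-1}})^{-1}$ in Proposition~\ref{prop:N_t}. As written, your proposal defers the step you yourself identify as hardest to unspecified ``technical estimates,'' so it is incomplete on its own terms; with the indexing fixed, that step vanishes and the proof is literally a rerun of Theorem~\ref{thm:main} with $f=g/\Lambda$ and the new normalization $c_t\sim(\mu\overline{\Lambda^{-1}})^{1/\alpha-1}c_{N_t}$.
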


\subsection{CTRW in a Poisson shot-noise potential environment}

The conclusion of Theorem~\ref{thm:main-lambda} is also true for a random $\Lambda$ independent of $X$ provided that $\Lambda$ satisfies B3--B4 almost surely, and $g/\Lambda$ satisfies one of the assumption A2(i) or A2(ii) almost surely. 

Of particular interest is a random $\Lambda$ of the special form, a so-called Poisson shot-noise potential:
\begin{equation}\label{eq:pois-lambda}
\Lambda(x,\gamma)= e^{-\sum_{y\in \gamma} \phi(x-y)}  =: e^{-E_\phi(x,\gamma)},
\end{equation}
where $\phi\colon\R\to [0,\infty), \gamma$ is a homogeneous Poisson configuration, i.e.\ a point process such that for any Borel set $A\subset \R$ having finite Lebesgue measure $\lambda(A)$, the number of points of $\gamma$ in $A$, $\abs{\gamma \cap A}$, has a Poisson distribution with parameter $\lambda(A)$. A sufficient condition for $\Lambda$ to be well defined for almost all $x\in\R^d$ is that $\phi\in L^1(\R)$.
To ensure the assumptions B3 and B4, we will need a stronger assumption. 

\begin{itemize}
\item[\textbf{C1.}] $\phi \in C(\mathbb R)$ and there exist some $C,\beta>0$ such that $|\phi(x)|\le \frac{C}{1+|x|^{1+\beta}}$. 
\end{itemize}
Under this assumption, 
$$
\ex{\Lambda(x,\gamma)^{-a} } = \exp\left\{ \int_{-\infty}^\infty \bigl(e^{a\phi(y)}-1\bigr)dy \right\}
$$
for any $a\in \R$ and
\begin{equation}\label{eq:E-phi}
\sup_{|x|\le n}\abs{E_\phi(x,\gamma)} = O\left(\frac{\log |n|}{\log \log |n| }\right), n\to\infty,
\end{equation}
a.s. (see \cite{carmona}). 

\begin{Proposition}\label{prop:b3b4}
Under the assumption {\rm C1}, for any $\delta>0$, 
\begin{equation} \label{eq:B3}
\sup_{|x|\le n}\Lambda(x,\gamma)^{-1} = o(n^\delta), n\to \infty,
\end{equation}
 a.s. and for any $r>1$, 
$$
\sup_{|x|\le t^r} \abs{\frac{1}{t} \int_x^{x+t} \Lambda(y,\gamma)^{-1}dy - \ex{\Lambda(0,\gamma)^{-1}}}\to  0, t \to +\infty,
$$
almost surely.
\end{Proposition}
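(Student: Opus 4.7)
The first assertion is immediate from \eqref{eq:E-phi}: since $\Lambda(x,\gamma)^{-1}=e^{E_\phi(x,\gamma)}$, one has $\sup_{|x|\le n}\Lambda(x,\gamma)^{-1}\le \exp(O(\log n/\log\log n))=n^{O(1/\log\log n)}$ a.s., which is $o(n^\delta)$ for every $\delta>0$.

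For the second assertion, I would first identify the candidate limit by the Campbell-type moment formula stated above \eqref{eq:E-phi}:
$$
\overline{\Lambda^{-1}}:=\ex{\Lambda(0,\gamma)^{-1}}=\exp\left\{\int_{-\infty}^\infty(e^{\phi(u)}-1)du\right\}<\infty
$$
(finite by C1, since $\phi$ is bounded and in $L^1$). Setting $F(x,t):=\frac{1}{t}\int_x^{x+t}\Lambda(y,\gamma)^{-1}dy$, the goal becomes $\sup_{|x|\le t^r}|F(x,t)-\overline{\Lambda^{-1}}|\to 0$ a.s.\ as $t\to\infty$. The core of the argument is a uniform $2m$-th moment estimate
$$
\ex{\bigl(F(x,t)-\overline{\Lambda^{-1}}\bigr)^{2m}}\le C_m\,t^{-m}
$$
for every positive integer $m$. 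I would derive this from the exact product formula
$$
\ex{\prod_{i=1}^k\Lambda(y_i,\gamma)^{-1}}=\exp\Bigl\{\int_{-\infty}^\infty\bigl(e^{\sum_{i=1}^k\phi(y_i-z)}-1\bigr)dz\Bigr\}
$$
together with the cluster identity $e^{\sum_i a_i}-1-\sum_i(e^{a_i}-1)=\sum_{|S|\ge 2}\prod_{i\in S}(e^{a_i}-1)$, which exposes the joint cumulants of the family $(\Lambda(y_i,\gamma)^{-1})_i$ as cluster integrals $\int\prod_{i\in S}(e^{\phi(y_i-z)}-1)dz$. By C1 these decay polynomially in the pairwise separations (e.g.\ $\int\phi(y_1-z)\phi(y_2-z)dz=O(|y_1-y_2|^{-(1+\beta)})$) and are therefore integrable in the free spatial variables. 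Since centering kills singleton blocks, only partitions of $[2m]$ with all blocks of size $\ge 2$ contribute, producing at most $m$ free spatial degrees of freedom and hence the desired $t^m$ (rather than $t^{2m}$) factor.

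With the moment estimate in hand, the uniform a.s.\ convergence follows by the standard discretization plus Borel--Cantelli recipe. Along integer $n\to\infty$, cover $[-n^r,n^r]$ by unit intervals ($O(n^r)$ grid points); Chebyshev and a union bound give
$$
\prob\bigl(\max_{|k|\le n^r}|F(k,n)-\overline{\Lambda^{-1}}|>\eps\bigr)\le C_{m,\eps}\,n^{r-m},
$$
which is summable for any $m>r+1$, so this maximum tends to $0$ a.s.\ by Borel--Cantelli. To interpolate between grid points and between integer and non-integer times, I would use the Lipschitz-type bound $|\partial_x F(x,t)|\le 2t^{-1}\sup_{|y|\le 2t^r}\Lambda(y,\gamma)^{-1}$ together with the analogous control of $t\mapsto F(x,t)$ on $[n,n+1]$; both are $o(1)$ a.s.\ by \eqref{eq:B3} applied with $\delta<1/r$. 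The principal obstacle is the moment estimate: it demands a careful cumulant/partition bookkeeping and the verification that the polynomial decay $\beta>0$ in C1 is exactly what makes every cluster integral integrable in the required variables.
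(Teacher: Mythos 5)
Your first claim and the overall architecture of your second coincide with the paper's: a $2m$-th central moment bound of order $t^{-m}$ for the window average, Chebyshev plus a union bound over a polynomially sized grid, Borel--Cantelli along integer times, and interpolation between grid points and between integer and real times via \eqref{eq:B3}. Where you genuinely diverge is in how the moment bound is obtained. The paper truncates the potential, setting $\phi_n(x)=\phi(x)\ind{|x|\le n^a}$ for some $a\in(0,1)$, shows via \eqref{eq:E-phi} and the tail bound in C1 that $\sup_{|x|\le 2n^r}\abs{\Lambda(x,\gamma)^{-1}-\Lambda_n(x,\gamma)^{-1}}\to0$ a.s., and then exploits that $\Lambda_n(\cdot,\gamma)^{-1}$ is a stationary $2n^a$-independent process, to which a general combinatorial moment lemma for centered $a$-independent processes (Lemma~\ref{lem:aindependent}) applies, giving the bound $C_k n^{k(a-1)}$. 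You instead keep the full $\Lambda$ and argue through the exact Poisson exponential formula and a cluster/cumulant expansion. That route is viable and would even yield the cleaner rate $C_m t^{-m}$, but the step you yourself flag as the principal obstacle is exactly where the work sits: to extract one free spatial variable per block you need each joint cumulant of a block $B$ to be integrable in $\abs{B}-1$ of its variables, which requires a tree-graph bound; the direct estimate $\abs{\int\prod_{i\in T}(e^{\phi(y_i-z)}-1)dz}\le C(1+\abs{y_i-y_j})^{-(1+\beta)}$ only gives decay in the minimal pairwise distance within $T$ and is not by itself integrable once $\abs{T}\ge3$. This bookkeeping is standard but not automatic, and it is precisely what the paper's truncation-plus-finite-range-dependence device is designed to sidestep. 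Both arguments ultimately rest on the same two consequences of C1: integrability of $e^{c\phi}-1$ (finiteness of the limit constant and of all moments of $\Lambda^{-1}$) and the polynomial tail of $\phi$ (control of the truncation error in the paper, of the cluster integrals in your version).
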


We are now in the position to prove the main result of this section. To ensure A2 for the function $g/\Lambda$, we impose suitable assumptions on $g$. 

\begin{itemize}
\item[\textbf{C2.}]Either $g\in L^1(\R)\cap L^2(\R)$, $e^{2\phi}-1\in L^1(\R)$ and the characteristic function $\varphi_{\xi_1}$  of jump sizes is integrable to some power $p>0$, or 
$g\in L^1(\R)$ and there exist some $C,\eps>0$ such that $|g(x)|\le C(1+|x|^\eps)^{-1}$ for all $x\in \R$. 
\end{itemize}
\begin{Theorem}\label{thm:main-poisson}
Let $X$ be given by \eqref{eq:ctrw-lambda} and $\Lambda$ be given by \eqref{eq:pois-lambda} with $\gamma$ independent of $X$. Under assumptions {\rm A3, B1, B2, C1, C2}, the finite-dimensional distributions of the process
$$
\sigma t^{1/\alpha-1}\int_0^{tu}g(X_s)ds,\quad  u\ge 0,
$$
converge as $t\to+\infty$ to those of 
$$
\mu^{1/\alpha}\cdot\exp\set{\Big(\frac1\alpha-1\Big)\int_{-\infty}^\infty \bigl(e^{\phi(y)}-1\bigr)dy}\cdot \int_{-\infty}^\infty \frac{g(x)}{\Lambda(x,\gamma)}dx\cdot  \ell_\alpha(u,0),\quad u\ge 0,
$$
with $\ell_\alpha$ independent of $\gamma$. 
\end{Theorem}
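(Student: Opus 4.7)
The plan is to reduce the statement to Theorem \ref{thm:main-lambda} by conditioning on the Poisson configuration $\gamma$. Since $\gamma$ is independent of $X$, conditioning does not affect the laws of $(\xi_n)$ and $(\theta_n)$, so on an event of full $\gamma$-probability $\Lambda(\cdot,\gamma)$ can be treated as a deterministic environment and Theorem \ref{thm:main-lambda} applied to $f=g/\Lambda(\cdot,\gamma)$. The limiting stable L\'evy motion $Z_\alpha$ and its local time $\ell_\alpha$ are measurable with respect to $(\xi_n)$ and hence automatically independent of $\gamma$, which justifies the last assertion of the theorem.

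Three families of hypotheses of Theorem \ref{thm:main-lambda} have to be verified almost surely. First, A3 and B1--B2 concern only the jumps and the waiting times, so they carry over unchanged. Second, B3 and B4 for $\Lambda(\cdot,\gamma)$ are precisely the content of Proposition \ref{prop:b3b4}, which also identifies $\overline{\Lambda^{-1}} = \ex{\Lambda(0,\gamma)^{-1}}$. Third---and this is the main technical point---one must verify A2 for $g/\Lambda(\cdot,\gamma)$ for almost every $\gamma$.

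Under the first alternative of C2, A2(ii) is the relevant condition. By stationarity $\ex{\Lambda(x,\gamma)^{-q}} = \exp\set{\int(e^{q\phi(y)}-1)dy}$ is independent of $x$, so Fubini gives
$$
\ex{\int_\R \abs{g(x)/\Lambda(x,\gamma)}^q dx} = \exp\set{\int(e^{q\phi(y)}-1)dy}\cdot \int_\R\abs{g(x)}^q dx
$$
for $q=1,2$. The hypothesis $e^{2\phi}-1\in L^1$ makes both expressions finite (and implies $e^\phi-1\in L^1$ since $e^\phi-1\le \tfrac12(e^{2\phi}-1)$ when $\phi\ge 0$), so $g/\Lambda(\cdot,\gamma)\in L^1\cap L^2$ a.s.; the integrability of $\abs{\varphi_{\xi_1}}^p$ is given directly. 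Under the second alternative, A2(i) applies: the absolutely continuous component is provided by B2, $L^1$-ness follows from the same Fubini argument with $q=1$, and the $L^\infty$ bound comes from continuity of $\phi$ (local boundedness) together with \eqref{eq:E-phi}: for any $\delta\in(0,\eps)$, one has $e^{E_\phi(x,\gamma)}\le\abs{x}^\delta$ for large $\abs{x}$, whence $\abs{g(x)/\Lambda(x,\gamma)}\le C\abs{x}^\delta(1+\abs{x}^\eps)^{-1}\to 0$.

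Substituting $\overline{\Lambda^{-1}} = \exp\set{\int(e^\phi-1)dy}$ into Theorem \ref{thm:main-lambda} converts the factor $(\overline{\Lambda^{-1}})^{1/\alpha-1}$ into the ``averaged'' prefactor $\exp\set{(1/\alpha-1)\int(e^\phi-1)dy}$ of the statement, while the ``quenched'' integral $\int g(x)/\Lambda(x,\gamma)dx$ appears unchanged. The main obstacle will be the verification of A2, especially the $L^\infty$ bound under the second alternative of C2, where the subpolynomial-growth estimate \eqref{eq:E-phi} for the shot-noise potential plays the crucial role.
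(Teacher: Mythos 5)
Your proposal is correct and follows essentially the same route as the paper: reduce to the quenched statement via independence of $\gamma$ and $X$, invoke Proposition~\ref{prop:b3b4} for B3--B4 with $\overline{\Lambda^{-1}}=\ex{\Lambda(0,\gamma)^{-1}}=\exp\set{\int(e^{\phi}-1)dy}$, and verify A2 for $g/\Lambda(\cdot,\gamma)$ by the Fubini computation of $\ex{\norm{g/\Lambda}_{L^1}}$ and $\ex{\norm{g/\Lambda}_{L^2}^2}$ together with the subpolynomial bound on $e^{E_\phi}$ for the $L^\infty$ case. The only cosmetic difference is that you derive the $L^\infty$ bound directly from \eqref{eq:E-phi} where the paper cites B3 (i.e.\ \eqref{eq:B3}), which is the same estimate.
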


\subsection*{Acknowledgments}
Georgiy Shevchenko is grateful to Aleksei  Kulik, Professor  of Wroclaw University of Science and Technology, for fruitful discussions that led to the proof of Lemma \ref{lem:sum1/lambda}.

\appendix
\section{Proofs and auxiliary results}

\begin{proof}[Proof of Theorem \ref{thm:main}]
For simplicity, we show marginal convergence for $u=1$; for arbitrary finite-dimensional distributions the proof is the same, just heavier in terms of notation.

Denote $\tau_n = \sum_{k=1}^{n}\theta_k$, $n\ge 0$, and write 
\begin{equation}\label{decompos}
c_t\int_0^t f(X_s)ds = c_t\sum_{k=1}^{N_t} \theta_k f(S_{k-1}) + c_t\big(t-\tau_{N_t}\big) f(S_{N_t}).
\end{equation}
By the strong law of large numbers, $N_t \sim  t/\mu$, $t\to\infty$.
Therefore, since $c_t$ is regularly varying at infinity of index $1/\alpha-1$, 
\begin{equation}\label{eq:ct}
c_t\sim \mu^{1/\alpha -1}c_{N_t}, t\to \infty,
\end{equation}
 a.s.  
Thus, thanks to Slutsky's lemma, we need to study the asymptotics of the normalized sums 
$$
\zeta_n = c_n\sum_{k=1}^{n} \theta_k f(S_{k-1})\;\text{as}\;n\rightarrow \infty
$$
(the remainder $c_t\big(t-\tau_{N_t}\big) f(S_{N_t})$ will be handled later).

\textit{Step 1}. Let us first consider the case of a bounded $f$. Thanks to independence of $\xi$ and $\theta$, we can write
\begin{equation*}\begin{gathered}
\varphi_{\zeta_n}(\lambda) = \ex{\ex{\set{i\lambda c_n \sum_{k=1}^n\theta_k x_k}}\Bigg|_{x_k=f(S_{k-1}) ,k=1,\dots,n}} \\
= \ex{\prod_{k=1}^n \varphi_{\theta_1}\bigl(\lambda c_n f(S_{k-1})\bigr)}
= \ex{\exp\set{\sum_{k=1}^n \Log\varphi_{\theta_1}\bigl(\lambda c_n f(S_{k-1})\bigr)}},
\end{gathered}\end{equation*}
where $\Log$ denotes the branch of the natural logarithm such that $\Log z\in (-\pi,\pi]$ for all $z\in \mathbb{C}\setminus\{0\}$. 
From assumption A3  we have
$$
\varphi_{\theta_1}(t) = 1 + i\mu t + o(t), t\to 0.
$$
Since also
$$
\Log(1+x) - x = o(x), x\to 0,
$$
we get 
$$
r(t):= \Log\varphi_{\theta_1}(t) - i\mu t = o(t), t\to 0. 
$$
Now
\begin{equation}\label{eq:zetachf}\begin{gathered}
\varphi_{\zeta_n}(\lambda) =  \ex{\exp\set{\sum_{k=1}^n \bigl( i\mu \lambda c_n f(S_{k-1}) + R_{k,n}\bigr)}}\\
 =  \ex{\exp\set{i\mu \lambda c_n\sum_{k=1}^n   f(S_{k-1}) + R_{n}}},
\end{gathered}\end{equation}
where $R_{k,n} = r\bigl(\lambda c_n f( S_{k-1})\bigr)$, $R_n = \sum_{k=1}^n R_{k,n}$. By Theorem \ref{thm:jeganathan}, 
$$
c_n\sum_{k=1}^n   f(S_{k-1}) \overset{d}{\longrightarrow} \int_{-\infty}^\infty f(x)dx\cdot  \ell_\alpha(1,0), n\to\infty.  
$$
Since the absolute value of the expression inside the expectation in \eqref{eq:zetachf} is bounded by 1, 
we just need to show that $R_n\to 0$, $n\to\infty$, in probability. To this end, fix arbitrary $\eps>0$ and let $\delta>0$ be such that $\abs{r(t)}< \eps\abs{t}$ whenever $\abs{t}<\delta$. Since $f$ is bounded, $\lambda c_n |f(x)|\le \delta$ for all $x\in\R$ and all $n$ large enough. Then we can write
\begin{gather*}
\abs{R_n}\le \sum_{k=1}^n \abs{R_{k,n}} 
\le \eps \abs{\lambda} c_n \sum_{k=1}^n  \abs{f(S_{k-1})}.
\end{gather*}
By Theorem 1, 
\begin{equation}\label{eq:absconv}
c_n\sum_{k=1}^n  \abs{f(S_{k-1})} \overset{d}{\longrightarrow} \int_{-\infty}^\infty |f(x)|dx\cdot  \ell_\alpha(1,0), n\to\infty, 
\end{equation}
so for any $\eta>0$,
$$
\limsup_{n\to\infty}\prob \left(R_{n}\ge  \eta\right)\le \prob\left(\int_{-\infty}^\infty |f(x)|dx\cdot  \ell_\alpha(1,0)\ge \frac{\eta}{\eps|\mu\lambda|}\right).
$$
Letting $\eps\to 0+$, we arrive at $\limsup_{n\to\infty}\prob \left(R_n\ge  \eta\right) = 0$, which gives the desired convergence in probability. 

Consequently, from the L\'evy theorem we get 
\begin{equation}\label{eq:discreteconv}
\zeta_n \overset{d}{\longrightarrow} \mu \int_{-\infty}^\infty f(x)dx\cdot  \ell_\alpha(1,0), n\to\infty. 
\end{equation}

\textit{Step 2}. Now let $f$ be unbounded. We are going to apply \cite[Theorem~3.2]{billingsley}. As we have just shown,  for any $m \ge 1$,
$$
\zeta_{n}^m:=c_n\sum_{k=1}^{n} \theta_k f(S_{k-1}) \ind{\abs{f(S_{k-1})}\le m} \overset{d}{\longrightarrow} \zeta^m:=\mu \int_{-\infty}^\infty f(x)\ind{|f(x)|\le m}dx\cdot  \ell_\alpha(1,0),\ n\to\infty.
$$
It is also clear that
$$
\zeta^m \overset{d}{\longrightarrow} \mu\int_{-\infty}^\infty f(x)dx\cdot  \ell_\alpha(1,0), m\to\infty. 
$$
So it remains to deal with 
$$
\abs{\zeta_n - \zeta_{n}^m} = c_n\abs{\sum_{k=1}^{n} \theta_k f(S_{k-1}) \ind{\abs{f(S_{k-1})}> m}}\le c_n\sum_{k=1}^{n} \theta_k \abs{f(S_{k-1})} \ind{\abs{f(S_{k-1})}> m}.
$$
Denote $f_m(x) = \abs{f(x)} \ind{\abs{f(x)}> m}$. For any $\eps>0$, owing to independence of $\xi$ and $\theta$, we can write
\begin{equation}\label{eq:probab>eps}\begin{gathered}
\prob\bigl(\abs{\zeta_n - \zeta_{n}^m}>\eps \bigr) \le \ex{\prob\left(c_n\sum_{k=1}^{n} \theta_k x_k>\eps\right)\Bigg|_{x_k=f_m(S_{k-1}) ,k=1,\dots,n}}.
\end{gathered}\end{equation}
Thanks to the Markov inequality,
$$\prob\biggl(c_n\sum_{k=1}^{n} \theta_k x_k>\eps\biggr)\le \frac{c_n}{\eps}\ex{\sum_{k=1}^{n} \theta_k x_k} = \frac{c_n\mu }{\eps}\sum_{k=1}^{n}  x_k,$$
so 
$$
\prob\biggl(c_n\sum_{k=1}^{n} \theta_k x_k>\eps\biggr)\Bigg|_{x_k=f_m(S_{k-1}),k=1,\dots,n}\le \frac{c_n\mu }{\eps}\sum_{k=1}^{n}f_m(S_{k-1}).
$$
By Theorem~\ref{thm:jeganathan}, 
$$
c_n\sum_{k=1}^{n}f_m(S_{k-1}) \overset{d}{\longrightarrow}\int_{-\infty}^\infty \abs{f(x)}\ind{|f(x)|>m} dx \cdot \ell_\alpha(1,0), n\to\infty. 
$$
By the Skorokhod representation theorem, there exist random variables $\ell'_\alpha(1,0)$ and $S_{k-1}^{m,n}$, $m,n\ge 1, k=1,\dots,n$, such that:
\begin{itemize}
\item $\ell'_\alpha(1,0) \overset{d}{=} \ell_\alpha(1,0)$;
\item 
for each $n,m\ge 1$, 
\begin{equation}\label{eq:distreqS_k}
\left(S_{k-1}^{m,n},k=1,\dots,n\right) \overset{d}{=} \left(S_{k-1},k=1,\dots,n\right);
\end{equation}
\item  for each $m\ge 1$, 
$$
c_n\sum_{k=1}^{n}f_m\bigl(S^{m,n}_{k-1}\bigr) \to\int_{-\infty}^\infty \abs{f(x)}\ind{|f(x)|>m} dx \cdot \ell'_\alpha(1,0), n\to\infty,
$$
almost surely.
\end{itemize}
 Then, using \eqref{eq:distreqS_k} and the Fatou lemma, we obtain from \eqref{eq:probab>eps} that
\begin{equation*}\begin{gathered}
\limsup_{m\to \infty}\limsup_{n\to \infty}\prob\bigl(\abs{\zeta_n - \zeta_{n}^m}>\eps \bigr) 
\\
\le \ex{\limsup_{m\to \infty}\limsup_{n\to \infty}\prob\biggl(c_n\sum_{k=1}^{n} \theta_k x_k>\eps\biggr)\Bigg|_{x_k=f_m(S^{m,n}_{k-1}),k=1,\dots,n}} \\
\le \ex{\limsup_{m\to \infty}\limsup_{n\to \infty}\frac{c_n\mu }{\eps}\sum_{k=1}^{n}f_m(S^{m,n}_{k-1})}\\
 = \ex{\limsup_{m\to \infty}\frac{\mu}{\eps}\int_{-\infty}^\infty \abs{f(x)}\ind{|f(x)|>m} dx \cdot \ell'_\alpha(1,0)} = 0.
\end{gathered}\end{equation*}
Therefore, using  \cite[Theorem 3.2]{billingsley}, we get \eqref{eq:discreteconv} also in this case. 

\textit{Step 3}. Taking into account \eqref{eq:ct}, the independence of $\zeta_n$  of $N_t$, and the convergence $N_t\to\infty$, $t\to \infty$, a.s., we get
$$
c_t\sum_{k=1}^{N_t} \theta_k f(S_{k-1}) \overset{d}{\longrightarrow} \mu^{1/\alpha} \int_{-\infty}^\infty f(x)dx\cdot  \ell_\alpha(1,0), t\to\infty. 
$$

It remains to handle the term $c_t\big(t-\tau_{N_t}\big) f\big(S_{N_t}\big)$. Clearly, $\big(t-\tau_{N_t}\big)\le \theta_{N_t + 1}$. Therefore, appealing to \eqref{eq:ct} and to the almost sure convergence $N_t\to \infty$, it suffices to show that 
$c_n \theta_{n+1} f\big(S_{n}\big)\overset{\prob}{\longrightarrow} 0$, $n\to\infty$. Since $\theta_n$ are identically distributed, they are bounded in probability, so we only need to show that $c_n f(S_n) \overset{\prob}{\longrightarrow} 0$, $n\to\infty$. This, however, easily follows from \eqref{eq:absconv}. Indeed, to we clearly have also
\begin{equation}\label{eq:tp}
c_n\sum_{k=1}^{n-1} |f(S_{k-1})| \overset{d}{\longrightarrow} \int_{-\infty}^\infty |f(x)|dx\cdot  \ell_\alpha(t,0), n\to\infty.
\end{equation}
But if  $\limsup_{n\to\infty}\prob\bigl(c_n |f(S_n)|\ge \eta\bigr)$ were positive for some $\eta>0$, the limiting distribution of $c_n\sum_{k=1}^{n} |f(S_{k-1})| = c_n |f(S_n)| + c_n\sum_{k=1}^{n-1} |f(S_{k-1})|$ would strictly dominate that of $c_n\sum_{k=1}^{n-1} |f(S_{k-1})|$, which would contradict \eqref{eq:absconv} and \eqref{eq:tp}. 
\end{proof}

\begin{Lemma}\label{lem:sum1/lambda}
Assume {\rm B1, B2} and let a function $h\colon \R\to [0,\infty)$ satisfy 
\begin{itemize}
\item [{\rm\textbf{H1.}}]
For any $\delta>0$, $\sup_{|x|\le n} h(x) = o(n^\delta), n\to \infty$. 
\item [{\rm\textbf{H2.}}] There exists $\bar h\ge 0$ such that for some $r>\alpha$ there is a uniform convergence of Cezaro averages:
$$
\sup_{|x|\le t^r} \abs{\frac{1}{t} \int_x^{x+t} h(y)dy - \bar h}\to 0, t \to +\infty.
$$
\end{itemize}
Then,
$$
\frac1n\sum_{k=1}^n h(S_k)\overset{\prob}{\longrightarrow} \overline{h}, n\to\infty. 
$$
\end{Lemma}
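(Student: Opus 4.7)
The plan is to establish $L^2$ convergence, which implies convergence in probability, by analyzing the mean and the variance separately.

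\textbf{Step 1: $\ex{h(S_k)} \to \bar h$.} Write $\ex{h(S_k)} = \int_{\R} h(x) f_{S_k}(x)\,dx$. Under B1--B2, a quantitative local limit theorem for densities in the normal domain of attraction of $Z_\alpha$ (see e.g.\ \cite{banys,basu}) yields $\sigma k^{1/\alpha} f_{S_k}(\sigma k^{1/\alpha}y) \to f_{Z_\alpha}(y)$, uniformly in $y$ with a polynomial rate. I would then choose an intermediate scale $\tau = k^{\beta/\alpha}$ with $\beta \in (1/r, 1)$ (nonempty since $r > \alpha > 1$) and partition $\R$ into intervals $I_j = [j\tau, (j+1)\tau)$. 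For $|j\tau| \le \tau^r$, H2 gives $\tau^{-1}\int_{I_j} h = \bar h + o(1)$; since $\tau \ll k^{1/\alpha}$, $f_{S_k}$ is effectively constant on each $I_j$, so the sum over $j$ becomes a Riemann approximation of $\int f_{S_k} = 1$, contributing $\bar h(1+o(1))$. The tail $|x|>\tau^r$ is controlled via H1 and the moment bound $\ex{|S_k|^{\delta'}} \le C k^{\delta'/\alpha}$ valid for $\delta' < \alpha$: by Markov, $\int_{|x|>\tau^r} |x|^\delta f_{S_k}(x)\,dx \to 0$ because $\tau^r \gg k^{1/\alpha}$. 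Ces\`aro summation then gives $\frac{1}{n}\sum_{k=1}^n \ex{h(S_k)} \to \bar h$.

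\textbf{Step 2: variance vanishes.} I would show that $\ex{\bigl(\tfrac{1}{n}\sum_k h(S_k)\bigr)^2} \to \bar h^2$. The diagonal $\frac{1}{n^2}\sum_k \ex{h(S_k)^2}$ is $O(n^{-1+2\delta/\alpha})$, hence $o(1)$, using H1 and the same moment bounds as above. For off-diagonal terms with $k<l$, independence of increments gives
\begin{equation*}
\ex{h(S_k)h(S_l)} = \ex{h(S_k)\, H_{l-k}(S_k)}, \qquad H_m(x) := \int h(x+z) f_{S_m}(z)\,dz.
\end{equation*}
A shifted version of the Step 1 argument yields $H_m(x) \to \bar h$ uniformly over $|x| \le m^{1/\alpha + \eps_0}$ for some small $\eps_0>0$; the key point is that the window $|\cdot| \le \tau^r$ in H2, evaluated at $\tau = m^{\beta/\alpha}$, still dominates shifts of such size as long as $\beta r > 1 + \alpha\eps_0$. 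Conditioning on the event $\{|S_k| \le k^{1/\alpha+\eps_0}\}$ (whose complement has probability $o(1)$ and contributes negligibly via Cauchy--Schwarz with H1 moment control) gives $\ex{h(S_k)h(S_l)} \to \bar h \cdot \ex{h(S_k)}$ uniformly for pairs with $(l-k)/k \to \infty$. The remaining pairs $(k,l)$ form a vanishing fraction of $\{1,\dots,n\}^2$, so the full double sum converges to $\bar h^2$.

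\textbf{Main obstacle.} The core difficulty is calibrating the three scales in Step 1---the intermediate scale $\tau$ (large enough for H2 to apply on each $I_j$ but small enough for $f_{S_k}$ to be nearly constant there), the effective cutoff $\tau^r$ governing the truncation, and the quantitative rate in the local limit theorem---so that each contributes a vanishing error as $k\to\infty$. The hypothesis $r>\alpha$ is exactly what makes the window $\beta \in (1/r, 1)$ nonempty. The shifted-LLT estimate required in Step 2 is essentially a re-run of Step 1 with parameter dependence carried uniformly, and this bookkeeping---ensuring that shifts of size $k^{1/\alpha+\eps_0}$ remain inside the H2 uniform window---is the most delicate technical point.
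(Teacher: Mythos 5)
Your route is genuinely different from the paper's: the paper truncates $h$ to $h_n(x)=h(x)\ind{|x|\le n^b}+\bar h\ind{|x|>n^b}$ and then invokes the Markov-approximation framework of Kulik and Kartashov--Kulik, under which convergence in probability of $\frac1n\sum_k h_n(S_k)$ follows from a purely first-moment hypothesis, namely uniform convergence of the characteristics $\frac1n\sum_{sn\le k<tn}\ex{h_n(S_k+x)}\to\bar h\,(t-s)$ over all shifts $x\in\R$; that estimate is proved via the $L^1$ local limit theorem of Banys/Basu and a layer-cake decomposition of the unimodal stable density into level-set intervals, to which H2 applies directly. Your $H_m(x)$ is exactly this characteristic, so the technical core (shifted LLT plus H2, with the calibration you identify as the main obstacle) is shared; what your second-moment route buys is independence from the external additive-functional theorem, at the cost of doing the covariance bookkeeping by hand. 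One minor caution: B1--B2 give the local limit theorem only in $L^1$ (rate $o(k^{1-2/\alpha})$), not in sup norm, but since $\alpha<2$ and $\sup_{|x|\le n^b}h=o(n^\delta)$ the $L^1$ version suffices for your replacement step.

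The covariance bookkeeping is where the proposal has a genuine error. The pairs $k<l\le n$ with $(l-k)/k$ bounded are \emph{not} a vanishing fraction of all pairs: for fixed $M$ the pairs with $l\ge Mk$ number about $n^2/(2M)$ out of $n^2/2$ total, so the set on which you claim control, namely pairs with $(l-k)/k\to\infty$, has asymptotic density zero, while the ``remaining pairs'' you discard are asymptotically all of them. As written, Step 2 therefore says nothing about the bulk of the double sum. The fix is to lower the threshold: the shifted estimate $H_m(x)\approx\bar h$ for $|x|\le k^{1/\alpha+\eps_0}$ only requires the H2 window at scale $\tau=m^{\beta/\alpha}$ to reach out to $\tau^r=m^{\beta r/\alpha}\ge k^{1/\alpha+\eps_0}$, i.e.\ $m\ge k^{c}$ with $c=(1+\alpha\eps_0)/(\beta r)$; since $r>\alpha>1$ one can choose $\beta<1$ with $\beta r>1$ and $\eps_0$ small so that $c<1$. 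Then the genuinely bad pairs satisfy $l-k<k^c$ and number only $O(n^{1+c})=o(n^2)$, each contributing $O(n^{2\delta/\alpha})$ by Cauchy--Schwarz and H1, so their total contribution to $n^{-2}\sum_{k<l}$ vanishes. With the threshold $l-k\ge k^c$, $c<1$, in place of $(l-k)/k\to\infty$, your argument goes through.
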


\begin{proof}
Take some $b\in (1,r/\alpha)$ and define $h_n(x)= h(x)\ind{|x|\le n^b} + \bar h \ind{|x|> n^b}$. 
It follows from H1 that for any $\delta>0$, 
\begin{equation}\label{eq:suphn}
\sup_{x\in \R} |h_n(x)| = o(n^\delta), n\to \infty.
\end{equation}
Clearly, extending $h$ by $\bar h$ may only improve convergence to $\bar h$, so it follows  from H2, that for any sequence $(a_n, n\ge1)$ such that $a_n\ge n^{b/r}$,
\begin{equation}\label{eq:uniformergodic}
\sup_{x\in \R} \abs{\frac{1}{a_n} \int_x^{x+a_n} h_n(y)dy - \bar h}\to 0, n \to \infty.
\end{equation}
Now observe that for any $a\in (0,1-b^{-1})$,
\begin{gather*}
\frac1n\ex{\abs{\sum_{k=1}^n h(S_k) - \sum_{i=1}^n h_n(S_k)}}\le \frac1n \sum_{k=1}^n \ex{\abs{h(S_k)-h_n(S_k)}}\le \frac{C}{n} \sum_{k=1}^n \ex{|S_k|^a\ind{|S_k|\ge n^b}}\\
\le Cn^{b(1-a)-1}  \sum_{k=1}^n \ex{|S_k|} \le  C n^{b(a-1)+1} \ex{|\xi_1|}\to 0, n\to\infty.
\end{gather*}
Therefore, it is enough to prove that 
$$
\frac1n\sum_{k=1}^n h_n(S_k)\overset{\prob}{\longrightarrow} \overline{h}, n\to\infty. 
$$
To this end, consider 
$$
\phi_n^{s,t} = \frac1n\sum_{sn\le k< tn} h_n(S_k) = \sum_{k:sn\le k< tn} F_n\big(X_n(k/n)\big), s<t, n\ge1,
$$
where $X_n(k/n) = n^{-1/\alpha}S_k$, $F_n(x) = n^{-1}h_n(n^{1/\alpha}x)$. As it was proved in \cite{kulik}, the processes $X_n$ provide a Markov approximation for the $\alpha$-stable L\'evy motion $Z$, therefore, we can use \cite[Theorem 1]{kartashov-kulik} about the convergence of additive functionals (concerning the terminology, we advise to consult the articles \cite{kartashov-kulik,kulik}). First note that $$
\sup_{x\in \R} |F_n(x)|\le \frac1n \left(\sup_{|x|\le T_n} |h(x)| + \bar h\right) \to 0, n\to \infty. 
$$
Further, the characteristic of the limiting functional $f^{s,t}(x):= \bar h\cdot  (t-s), s<t$ does not depend on $x$, so obviously satisfies the uniform continuity assumption of  \cite[Theorem 1]{kartashov-kulik}. It then remains to show the uniform (in $x\in \R, 0\le s<t\le 1$) convergence of characteristics
$$
f_n^{s,t}(x) := \sum_{k:sn\le k< tn} \ex{F_n\big(X_n(k/n)+x\big)} = \frac{1}{n}\sum_{k:sn\le k< tn} \ex{h_n(S_k + n^{1/\alpha}x)} 
$$
to $f^{s,t}(x)$. Since $f^{s,t}(x)$ is independent of $x$, this is equivalent to the uniform convergence of $f_n^{s,t}(n^{-1/\alpha}x)$. 

Fix some $\varepsilon\in (b\alpha/r,1)$ and consider  
\begin{gather*}
\ex{h_n(S_k + x)} = \int_{-\infty}^\infty h_n(k^{1/\alpha}y+x) f_{k^{-1/\alpha}S_k}(y)dy, k\ge n^{\eps}. 
\end{gather*}
By \cite{banys} (see also \cite{basu}), 
$$
\int_{-\infty}^{\infty} |f_{Z_\alpha}(y) - f_{k^{-1/\alpha}S_k}(y)|dy = o(k^{1-2/\alpha}), k\to \infty. 
$$
Therefore,  for  any $\delta\in \bigl(0,\eps(2/\alpha-1)\bigr)$, thanks to \eqref{eq:suphn},
\begin{equation}\label{eq:ex(hn)-ex(hn)}
\begin{gathered}
\sup_{x\in \R, k\ge n^\eps}\big|\ex{h_n(S_k + x)} - \ex{h_n(k^{1/\alpha }Z_\alpha+x)} \big|\\
\le \sup|h_n|\cdot  \sup_{k\ge n^\eps} \int_{-\infty}^{\infty} |f_{Z_\alpha}(y) - f_{k^{-1/\alpha}}(y)|dy = o\big(n^{\delta +\eps(1 - 2/\alpha)}\big)\to 0, n\to\infty.
\end{gathered}
\end{equation}
Further,
\begin{gather*}
\ex{h_n(k^{1/\alpha }Z_\alpha+x)} =  \int_{-\infty}^\infty h_n(k^{1/\alpha}y+x) f_{Z_\alpha}(y)dy\\
 =  \int_0^\infty \int_{y: f_{Z_\alpha}(y)\ge z} h_n(k^{1/\alpha} y + x)dy\, dz.
\end{gather*}
It is well known (see e.g. \cite[Chapter 2]{zolotarev}) that a stable distribution has an unimodal analytic density, so for each $z\in [0,\max f_{Z_\alpha})$, there exist some $a_z<b_z$ such that $\set{y: f_{Z_\alpha}(y)\ge z} =[a_z,b_z]$. Then we can write for some $\gamma\in \bigl(0,2(b/r-\eps/\alpha)\bigr)$
\begin{equation}\label{eq:ex(hn)}
\begin{gathered}
\ex{h_n(k^{1/\alpha }Z_\alpha+x)} = \int_0^{\max f_{Z_\alpha}} \int_{a_z}^{b_z} h_n(k^{1/\alpha }y + x)dy\, dz\\ = \left(\int_0^{\max f_{Z_\alpha}-n^{-\gamma}} + \int_{\max f_{Z_\alpha}-n^{-\gamma}}^{\max f_{Z_\alpha}} \right)\int_{a_z}^{b_z} h_n(k^{1/\alpha }y + x)dy\, dz. 
\end{gathered}
\end{equation}
Clearly, 
$$
\abs{\int_{a_z}^{b_z} h_n(k^{1/\alpha }y + x)dy} \le  \sup|h_n| (b_z-a_z) n^{-\gamma},
$$
whence, in view of \eqref{eq:suphn}, 
\begin{equation}\label{eq:inttail}
\sup_{x\in\R, k\ge 1}\abs{\int_{\max f_{Z_\alpha}-n^{-\gamma}}^{\max f_{Z_\alpha}} \int_{a_z}^{b_z} h_n(k^{1/\alpha }y + x)dy \, dz} \to 0, n\to\infty. 
\end{equation}
Further, for $z\le \max f_{Z_\alpha} - n^{-\gamma}$,
$$
\int_{a_z}^{b_z} h_n(k^{1/\alpha }y + x)dy = \frac{1}{k^{1/\alpha}}\int_{k^{1/\alpha} a_z + x}^{k^{1/\alpha} b_z + x} h(u)du = \frac{b_z-a_z}{k^{1/\alpha}(b_z-a_z)}\int_{k^{1/\alpha} a_z + x}^{k^{1/\alpha} b_z + x} h(u)du.
$$
Thanks to continuous differentiability of $f_{Z_\alpha}$, there exists some positive $c>0$ such that $b_z - a_z \ge c n^{-\gamma/2}$ for any $z\le \max f_{Z_\alpha} - n^{-\gamma}$. Therefore, for such $z$ and for $k\ge n^\eps$, $k^{1/\alpha} (b_z-a_z)\ge c n^{\eps/\alpha -\gamma/2} \ge n^{b/r}$ for all $n$ large enough. Consequently, in view of \eqref{eq:uniformergodic},
$$
\sup_{\substack{x\in \R, k\ge n^\eps\\z\le \max f_{Z_\alpha} - n^{-\gamma} }}\abs{\frac{1}{k^{1/\alpha}(b_z-a_z)}\int_{k^{1/\alpha} a_z + x}^{k^{1/\alpha} b_z + x} h(u)du - \bar h} \to 0, n\to \infty. 
$$
Combining this with \eqref{eq:ex(hn)}--\eqref{eq:inttail} and noting that $\int_0^{\max f_{Z_\alpha}} (b_z - a_z)dz = \int_{-\infty}^\infty f_{Z_\alpha}(x)dx = 1$, we get 
\begin{gather*}
\limsup_{n\to \infty }\sup_{x\in \R, k\ge n^\eps}\abs{\ex{h_n(k^{1/\alpha }Z_\alpha+x)} - \bar h}\\
 = 
\limsup_{n\to \infty }\sup_{x\in \R, k\ge n^\eps}\abs{\int_{0}^{\max f_{Z_\alpha}} \int_{a_z}^{b_z} h_n(k^{1/\alpha }y + x)dy\, dz- \int_{0}^{\max f_{Z_\alpha}} (b_z-a_z)\bar h \,dz}\\
=
\limsup_{n\to \infty }\sup_{x\in \R, k\ge n^\eps}\abs{\int_0^{\max f_{Z_\alpha}-n^{-\gamma}} (b_z-a_z)\left(\frac{1}{k^{1/\alpha}(b_z-a_z)}\int_{k^{1/\alpha} a_z + x}^{k^{1/\alpha} b_z + x} h(u)du  - \bar h\right) dz}\\
\le \limsup_{n\to \infty }\int_0^{\max f_{Z_\alpha}} (b_z - a_z)dz\cdot \sup_{\substack{x\in \R, k\ge n^\eps\\z\le \max f_{Z_\alpha} - n^{-\gamma} }}\abs{\frac{1}{k^{1/\alpha}(b_z-a_z)}\int_{k^{1/\alpha} a_z + x}^{k^{1/\alpha} b_z + x} h(u)du - \bar h} =0.
\end{gather*}
Recalling \eqref{eq:ex(hn)-ex(hn)}, we arrive at
\begin{gather*}
\sup_{x\in \R, k\ge n^\eps}\big|\ex{h_n(S_k + x)} - \bar h \big|\to 0, n\to\infty,
\end{gather*}
whence 
$$
\sup_{x\in \R,  n^{\eps-1}\le s<t\le 1} \abs{\frac1n \sum_{k:sn\le k< tn} \ex{h_n(S_k + x)} - \bar h\cdot (t-s)}\to 0, n\to \infty. 
$$
Also, thanks to \eqref{eq:suphn},
$$
\sup_{x\in \R, s\le n^{\eps-1}}\abs{\frac{1}{n}\sum_{k< ns} \ex{h_n(S_k + x)}}\le Cn^{\eps-1}\sup_{x\in \R}|h_n(x)| \to 0, n\to\infty. 
$$
Consequently,
\begin{gather*}
\sup_{x\in \R,  0\le s<t\le 1} \abs{\frac1n \sum_{k:sn\le k< tn} \ex{h_n(S_k + x)} - \bar h\cdot (t-s)}\to 0, n\to \infty. 
\end{gather*}
This shows the required uniform convergence of characteristics, so by \cite[Theorem 1]{kartashov-kulik} we get
$$
\frac1n\sum_{k=1}^n h_n(S_k)\longrightarrow \overline{h}, n\to\infty,
$$
in law, equivalently, 
in probability. 
\end{proof}

\begin{proof}[Proof of Proposition~\ref{prop:N_t}]
Denote $\gamma_n = \frac{1}{n}\sum_{i=1}^n \frac{\theta_i}{\Lambda(S_i)}$ and
write, similarly to the proof of Theorem~\ref{thm:main}, for any $\lambda \in \R\setminus\set{0}$,
\begin{equation*}\begin{gathered}
\varphi_{\gamma_n}(\lambda) = \ex{\ex{\set{\frac{i}{n} \sum_{k=1}^n\theta_k x_k}}\Bigg|_{x_k=\Lambda(S_{k-1})^{-1} ,k=1,\dots,n}} = \ex{\prod_{k=1}^n\exp\set{ \varphi_{\theta_1}\Bigl(\frac{\lambda}{n\Lambda(S_k)}\Bigr)}}\\
= \ex{\exp\set{\sum_{k=1}^n \Log\varphi_{\theta_1}\Bigl(\frac{\lambda}{n\Lambda(S_k)}\Bigr)}}
=  \ex{\exp\set{\frac{i\mu \lambda}{n}\sum_{k=1}^n   \frac1{\Lambda(S_{k-1})} + R_{n}}},
\end{gathered}\end{equation*}
where
\begin{gather*}
R_{n} = \sum_{k=1}^n r\Bigl(\frac{\lambda}{n\Lambda(S_k)}\Bigr),\
r(x) = \Log \varphi_{\theta_1}(x) - i\mu x = o(x), x\to 0. 
\end{gather*}
By Lemma~\ref{lem:sum1/lambda}, 
\begin{equation}\label{eq:Y_n->0}
Y_n:= \frac{1}{n}\sum_{k=1}^n   \frac1{\Lambda(S_{k-1})} \overset{\prob}{\longrightarrow} \overline{\Lambda^{-1}}, n\to\infty. 
\end{equation}
In order to prove the first claim it remains to show that $R_n\overset{\prob}{\longrightarrow} 0$, $n\to\infty$. Fix some $\eps>0$. For any $a>0$, there exists some $\delta>0$ such that $\abs{r(x)}\le a |x|$ for $|x|<\delta$. Therefore, on the event $A_n:=\set{\max_{k\le n}\Lambda(S_k)^{-1}\le n\delta/\abs{\lambda} }$, we have $\abs{R_n}\le a Y_n$. Therefore, 
$$
\prob\left(|R_n|>\eps\right)\le \prob(Y_n > \eps/a) + \prob(A_n^c).
$$
Choosing $a<\eps/\overline{\Lambda^{-1}}$, we get from \eqref{eq:Y_n->0} that $\prob(Y_n > \eps/a)\to 0$, $n\to\infty$. On the other hand, since by B3 for any $\eta<1$ it holds that $\Lambda(x)^{-1}\le K_\eta|x|^\eta$ with some $K_\eta>0$, we have
\begin{gather*}
\prob(A_n^c) \le \sum_{k=1}^n \prob\biggl(\Lambda(S_k)^{-1} \ge \frac{n\delta}{|\lambda|} \biggr)\le \sum_{k=1}^n \prob\biggl(|S_k|^\eta \ge \frac{n\delta}{K_\eta\abs{\lambda}} \biggr)\le \sum_{k=1}^n \prob(|S_k|\ge  Cn^{1/\eta} \biggr)\\ 
\le n^2 \prob(\abs{\xi_1}\ge C n^{1/\eta-1}) = n^2 O(n^{\alpha(1-1/\eta)}), n\to\infty,
\end{gather*}
where the last follows from B1 (see e.g.\ \cite[Section 1.1]{Borodin}). Taking $\eta <(1+2/\alpha)^{-1}$, we get $\prob(A_n^c)\to 0$, $n\to\infty$, thus establishing the convergence $R_n\overset{\prob}{\longrightarrow} 0$, $n\to\infty$, which finishes the proof for the first claim that $\gamma_n \mu  {\prob}{\longrightarrow} \overline{\Lambda^{-1}}$, $n\to\infty$.  The second one follows in a standard way: for any $x<\bigl(\mu \overline{\Lambda^{-1}}\bigr)^{-1}$,
$$
\prob\left(N_t \le tx\right)  = \prob\left(\sum_{i=1}^{[tx]} \frac{\theta_i}{\Lambda(S_i)}\ge t\right) = \prob\Bigl(\gamma_{[tx]}\ge \frac{t}{[tx]} \Bigr)\to 0, t\to +\infty,
$$
since $\lim_{t\to \infty}\frac{t}{[tx]} = \frac1x <\mu \overline{\Lambda^{-1}}$, and similarly for any $x>\bigl(\mu \overline{\Lambda^{-1}}\bigr)^{-1}$, $\prob\left(N_t \ge tx\right)\to 0$, $t\to \infty$. 
\end{proof} 
\begin{proof}[Proof of Theorem~\ref{thm:main-lambda}]
Similarly to \eqref{decompos}, we can write
\begin{equation*}
\int_0^t g(X_s)ds = \sum_{k=1}^{N_t} \theta_k \frac{g(S_{k-1})}{\Lambda(S_{k-1})} + g\big(t-\tau_{N_t}\big) \frac{g(S_{N_t})}{\Lambda(S_{N_t})}.
\end{equation*}
From Proposition~\ref{prop:N_t}, we have $N_t/t \overset{\prob}{\longrightarrow} \bigl(\mu \cdot \overline{\Lambda^{-1}}\bigr)^{-1}$, $n\to\infty$. Therefore, repeating the proof of Theorem~\ref{thm:main}, we arrive at the statement.
\end{proof}
The following lemma is probably well known, but we include it for completeness.
\begin{Lemma}\label{lem:aindependent}
Let $\set{Y_t,t\in [0,T]}$ be a centered measurable process which is $a$-independent for some $a\in (0,T)$, i.e. $\{Y_t,t\in A\}$ and $\set{Y_t,t\in B}$ are independent whenever $\inf_{t\in A,s\in B}|t-s|\ge a$. For each integer $k\ge 1$, there exists a universal constant $C_k>0$ such that 
$$
\ex{\left(\int_0^T Y_t dt\right)^{2k}}\le C_k (aT)^k \sup_{t\in[0,T]} \ex{Y_t^{2k}}.
$$
\end{Lemma}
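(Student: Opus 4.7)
The plan is to expand the $2k$-th moment as a $2k$-fold integral
\[
\ex{\left(\int_0^T Y_t\,dt\right)^{2k}} = \int_{[0,T]^{2k}} \ex{Y_{t_1}\cdots Y_{t_{2k}}}\,dt_1\cdots dt_{2k}
\]
and to exploit $a$-independence combinatorially, showing that the integrand is supported on a low-dimensional ``cluster'' region of measure $O\bigl((aT)^k\bigr)$.

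The key observation is as follows. Given $(t_1,\dots,t_{2k})\in[0,T]^{2k}$, arrange the coordinates in increasing order and group consecutive sorted points into clusters by the rule that two neighbours belong to the same cluster if and only if their separation is strictly less than $a$. Points in different clusters are then at distance at least $a$, so by $a$-independence the subproducts $\prod_{i\in C}Y_{t_i}$ over distinct clusters $C$ are mutually independent, and the expectation factorises as $\ex{Y_{t_1}\cdots Y_{t_{2k}}}=\prod_{C}\ex{\prod_{i\in C}Y_{t_i}}$. If any cluster is a singleton $\{i_0\}$ its factor is $\ex{Y_{t_{i_0}}}=0$, killing the whole product. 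Hence only configurations all of whose clusters have size at least two contribute, which forces the number of clusters $b$ to satisfy $b\le k$.

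It remains to estimate the Lebesgue measure of such configurations. Working in the sorted simplex and summing over the finitely many admissible cluster-size tuples $(n_1,\dots,n_b)$ with $n_i\ge 2$ and $\sum_i n_i=2k$, each cluster of size $n_i$ occupies a region of measure at most $T\cdot a^{n_i-1}$ (free leftmost point in $[0,T]$, then each of the remaining $n_i-1$ points confined to an interval of length $a$ starting at its predecessor), giving a total measure at most $T^b a^{2k-b}$. Since $b\le k$ and $a<T$, one has $T^b a^{2k-b} = (aT)^k(T/a)^{b-k}\le (aT)^k$. Combined with the trivial H\"older bound $|\ex{Y_{t_1}\cdots Y_{t_{2k}}}|\le M_{2k}:=\sup_{t\in[0,T]}\ex{Y_t^{2k}}$ and with the $(2k)!$ factor that undoes the sorting, this yields the desired estimate $C_k(aT)^k M_{2k}$, where $C_k$ collects the combinatorial weights of admissible cluster structures.

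There is no deep difficulty here; the only mildly delicate point is the clean bookkeeping of cluster-size tuples and the observation that the constraint $b\le k$ together with $a<T$ collapses the a priori bound $T^b a^{2k-b}$ to $(aT)^k$, which is precisely the mechanism that produces the exponent $k$ on $aT$. A fully equivalent route, slightly heavier in machinery, is to partition $[0,T]$ into consecutive blocks of length $a$, split the integral into sums over even- and odd-indexed blocks (each being a sum of independent centered summands), apply Rosenthal's inequality, and bound each block integral by $a^{2k}M_{2k}$ via Jensen; the two Rosenthal terms then collapse to $(aT)^k M_{2k}$ again using $a<T$.
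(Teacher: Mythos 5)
Your proof is correct, and it shares the paper's overall skeleton: expand the $2k$-th moment as a $2k$-fold integral, use centeredness together with $a$-independence to show the integrand vanishes off a ``clustered'' region, bound the integrand by $\sup_{t}\ex{Y_t^{2k}}$ via H\"older, and reduce the lemma to showing that the clustered region has Lebesgue measure at most $C_k(aT)^k$. Where you genuinely diverge is in that last measure estimate. The paper rescales to $[0,1]$, reinterprets the measure as a probability for iid uniforms, takes a union bound over graphs without isolated vertices, and conditions on a small covering/dominating set of at most $k$ vertices so that the remaining $k$ coordinates are each confined to a union of short intervals. You instead sort the coordinates, split them into maximal runs with consecutive gaps less than $a$, and sum the bound $T^b a^{2k-b}$ over cluster-size compositions with $b\le k$ parts, collapsing to $(aT)^k$ since $a<T$. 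Your route is more elementary and self-contained (no graph-theoretic input), at the cost of an explicit $(2k)!$ unsorting factor, which is harmless since only the dependence on $a$ and $T$ matters. One small caution: the definition of $a$-independence in the statement only asserts independence of \emph{two} well-separated families, so your claim that the expectation factorises over \emph{all} clusters simultaneously is not literally licensed by the hypothesis; however, the only consequence you use is that a singleton cluster annihilates the expectation, and that follows from splitting that singleton off from everything else, which is exactly the two-group statement. With that phrasing tightened, the argument is complete.
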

\begin{proof}
Since $Y$ is centered and $a$-independent, we have
\begin{gather*}
\ex{\left(\int_0^T Y_t dt\right)^{2k}} = \int_{S_{2k,a,T}} \ex{\prod_{i=1}^{2k} Y_{t_i}}dt_1\dots dt_{2k},
\end{gather*}
where $S_{2k,a,T} = \set{(t_1,\dots,t_{2k})\in [0,T]\mid \forall i=1,\dots,2k\ \exists j\neq i: |t_i-t_j|\le a}$. Using the H\"older inequality, we get
\begin{equation}\label{eq:intermineq}
\ex{\left(\int_0^T Y_t dt\right)^{2k}}\le \lambda(S_{2k},a,T)\sup_{t\in[0,T]} \ex{Y_t^{2k}}. 
\end{equation}
Clearly, $\lambda(S_{2k},a,T) = T^{2k} \lambda(S_{2k},a/T,1)$. In turn, 
$$\lambda(S_{2k},a/T,1) = \prob\left(\forall i=1,\dots,2k\ \exists j\neq i: |U_i - U_j|\le a/T\right),$$
where $U_1,\dots,U_{2k}$ are iid $U(0,1)$ random variables. Denote by $\mathcal G_{2k}$ the set of all graphs on $N_{2k}:=\set{1,\dots,2k}$ having no isolated vertices; for $G\in \mathcal G_{2k}$, let $V(G)$ be its set of edges, and $S(G)$ be its minimal vertex cover, i.e. the minimal (in cardinality) set of vertices adjacent to all edges of $G$. It is well known that $|S(G)|$ is equal to the number of edges in the maximal matching (disjoint set of edges) of $G$, so $S(G)\le k$.  Then 
\begin{gather*}
\prob\left(\forall i=1,\dots,2k\ \exists j\neq i: |U_i - U_j|\le a/T\right) = \prob\Biggl(\bigcup_{G\in \mathcal G_{2k}}\bigcap_{{i,j}\in V(G)} \{|U_i - U_j|\le a/T\}\Biggr) \\
\le \sum_{G\in \mathcal G_{2k}} \prob\Biggl(\bigcap_{i\in N_{2k}\setminus S(G)}\bigcup_{j\in S(G)} \{|U_i - U_j|\le a/T\}\Biggr) \\
= \sum_{G\in \mathcal G_{2k}} \ex{\prob\Biggl(\bigcap_{i\in N_{2k}\setminus S(G)}\bigcup_{j\in S(G)} \{|U_i - x_j|\le a/T\}\Biggr)\Bigg|_{x_j = U_j, j\in S(G)}}\\
= \sum_{G\in \mathcal G_{2k}} \ex{\prod_{i\in N_{2k}\setminus S(G)}\prob\Biggl(\bigcup_{j\in S(G)} \{|U_i - x_j|\le a/T\}\Biggr)\Bigg|_{x_j = U_j, j\in S(G)}}\\
\le \sum_{G\in \mathcal G_{2k}} \ex{\prod_{i\in N_{2k}\setminus S(G)}\sum_{j\in S(G)}\prob\Biggl( \{|U_i - x_j|\le a/T\}\Biggr)\Bigg|_{x_j = U_j, j\in S(G)}}\\
\le \sum_{G\in \mathcal G_{2k}} \ex{\prod_{i\in N_{2k}\setminus S(G)}\left(|S(G)|\cdot \frac{2a}{T}\right) }\\
\le \sum_{G\in \mathcal G_{2k}} \Bigl(\frac{2ka}{T}\Bigr)^{2k-|S(G)|}\le \Bigl(\frac{a}{T}\Bigr)^k\sum_{G\in \mathcal G_{2k}} (2k)^{2k-|S(G)|} = C_k\Bigl(\frac{a}{T}\Bigr)^k.
\end{gather*}
Recalling the fact that $\lambda(S_{2k},a,T)$ is $T^{2k}$ times this expression and the estimate \eqref{eq:intermineq}, we arrive at the statement.
\end{proof}
\begin{proof}[Proof of Proposition~\ref{prop:b3b4}]
The first statement follows immediately from \eqref{eq:E-phi}. 
In order to establish the second one, we start by noting that, in view of \eqref{eq:B3}, for large $t$ the average of $X$ over $[x,x+t]$ will be close to that over $[x,x+\lfloor t\rfloor]$, where $\lfloor t\rfloor$ is the integer part of $t$, so it is enough to show the convergence over integers. 
Most of the statements below will hold almost surely, so for brevity, we omit this phrase throughout. 

Fix some $a\in (0,1)$ define $
\phi_n(x) = \phi(x)\ind{|x|\le n^a}$, $\Lambda_n(x,\gamma) = e^{-E_{\phi_n}(x,\gamma)}$, $\bar \phi_n = \phi - \phi_n$.
Let $\nu_k = \abs{\gamma\cap[k-\frac12,k+\frac12]}$, $k\in \mathbb Z$. It is easy to show (see e.g. \cite[Lemma 2.1]{carmona}) that
$$
\sup_{k\in \mathbb Z} \frac{\nu_k}{l(k)/l(l(k))} <\infty,
$$
where $l(x) = 2 + \log(2+|x|), x\in \R$.

Therefore, for any $x\in \R$ and any $\eta\in(0,a\beta)$, using C1, we have
\begin{gather*}
\abs{E_{\phi}(x,\gamma) - E_{\phi_n}(x,\gamma)} = \abs{E_{\bar \phi_n}(x,\gamma)} \le \sum_{y\in \gamma} \abs{\bar\phi_n(x-y)} \\
\le C\sum_{k\in \mathbb Z, |k-x|\ge n^a - 1} \frac{\nu_k}{1 + |k-x|^{\beta+1}} \le 
C \sum_{m\in \mathbb Z, |m|\ge n^a - 1} \frac{1}{1 + m^{\beta+1}}\cdot \frac{l(m+x)}{l\bigl(l(m+x)\bigr)} 
\\ \le  C \sum_{|m|\in \mathbb Z, |m|\ge n^a - 1} \frac{1}{1 + m^{\beta+1}}\cdot \left(\frac{l(m)}{l(l(m))}+\frac{l(x)}{l(l(x))} \right) \le
\\
\le C \left(n^{-a\beta+\eta} + n^{-a\beta}\cdot\frac{l(x)}{l(l(x))} \right) \le  C n^{-a\beta+\eta}\cdot \frac{l(x)}{l(l(x))}.
\end{gather*}
Hence, owing to \eqref{eq:E-phi}, we get that for any $r>1$,
$$
\sup_{|x|\le 2 n^r} \abs{\Lambda(x,\gamma)^{-1} - \Lambda_n(x,\gamma)^{-1}} \to 0, n\to\infty,
$$
consequently, 
$$
\sup_{|x|\le n^r}\abs{\frac{1}{n} \int_x^{x+n} \Lambda(y,\gamma)^{-1}dy - \frac{1}{n} \int_x^{x+n} \Lambda_n(y,\gamma)^{-1}dy}\to 0, n\to\infty. 
$$
Since $\Lambda_n(0,\gamma)\le \Lambda(0,\gamma)$ and $\Lambda_n(0,\gamma)\to \Lambda(0,\gamma)$, $n\to\infty$, then $\ex{\Lambda_n(0,\gamma)^{-1}}\to\ex{\Lambda(0,\gamma)^{-1}}$, $n\to\infty$, so we are left to show that 
$$
\sup_{|x|\le n^r}\abs{\frac{1}{n} \int_x^{x+n} \Lambda_n(y,\gamma)^{-1}dy - \ex{\Lambda_n(0,\gamma)^{-1}}}\to 0, n\to\infty.
$$
Observe that the process $\Lambda_n(y,\gamma)$ is $2n^{a}$-independent. Then, using the stationarity of $\Lambda_n$, we obtain from Lemma~\ref{lem:aindependent} that  for any $k\ge 1$, 
\begin{gather*}
\ex{\left(\frac1n \int_x^{x+n} \Lambda_n(y,\gamma)^{-1}dy - \ex{\Lambda_n(0,\gamma)^{-1}}\right)^{2k}}\\ 
= n^{-k}\ex{\left( \int_x^{x+n} \bigl(\Lambda_n(y,\gamma)^{-1}-  \ex{\Lambda_n(y,\gamma)^{-1}}\bigr)dy \right)^{2k}}
\\
\le C_k n^{k(a-1)} \ex{\left(\Lambda_n(0,\gamma)^{-1} - \ex{\Lambda_n(0,\gamma)^{-1}}\right)^{2k}}\\
\le C_k \ex{\left(\Lambda(0,\gamma)^{-1} + \ex{\Lambda(0,\gamma)^{-1}}\right)^{2k}}  n^{k(a-1)}.
\end{gather*}
By Markov's inequality, for any $\eps>0$,
\begin{gather*}
\prob\left(\abs{\frac1n \int_x^{x+n} \Lambda_n(y,\gamma)^{-1}dy - \ex{\Lambda_n(0,\gamma)^{-1}}}\ge \eps\right)\le \frac{C_k n^{k(a-1)}}{\eps^k}.
\end{gather*}

Define the set $A_n = \set{ n^{r-a} i, i= - [n^a], \dots, [n^a]+1}$ and for $x\in [-n^r,n^r]$ denote $a_n(x) = \sup\{y\in A_n, y\le x\}$. Thanks to \eqref{eq:B3},
\begin{gather*}
\sup_{|x|\le n^r}\abs{\frac1n \int_x^{x+n} \Lambda_n(y,\gamma)^{-1}dy -\frac1n \int_{a_n(x)}^{a_n(x)+n} \Lambda_n(y,\gamma)^{-1}dy} \\
\le \sup_{|x|\le n^r}\frac{2(x-a_n(x))}{n}\cdot \sup_{|y|\le 2 n^r} \Lambda_n(y)^{-1} \le  2n^{a-1}\sup_{|y|\le 2 n^r} \Lambda(y)^{-1}\to 0, n\to\infty. 
\end{gather*}
Consequently, 
\begin{gather*}
\limsup_{n\to\infty}\prob\left(\sup_{|x|\le n^r} \abs{\frac1n \int_x^{x+n} \Lambda_n(y,\gamma)^{-1}dy - \ex{\Lambda_n(0,\gamma)^{-1}}}\ge \eps \right) \\
= \limsup_{n\to\infty}\prob\left(\sup_{x\in A_n} \abs{\frac1n \int_x^{x+n} \Lambda_n(y,\gamma)^{-1}dy - \ex{\Lambda_n(0,\gamma)^{-1}}}\ge \eps \right)\\
\le \limsup_{n\to\infty}\sum _{x\in A_n}\prob\left(\abs{\frac1n \int_x^{x+n} \Lambda_n(y,\gamma)^{-1}dy - \ex{\Lambda_n(0,\gamma)^{-1}}}\ge \eps \right)\\
\le \limsup_{n\to\infty}\sum _{x\in A_n} \frac{C_k n^{k(a-1)}}{\eps^k} \le 
 C_{k,\eps}\limsup_{n\to\infty} n^{k(a-1) + a}. 
\end{gather*}
Now taking $k> (1+a)/(1-a)$, we obtain that
$$
\sup_{|x|\le n^r} \abs{\frac1n \int_x^{x+n} \Lambda_n(y,\gamma)^{-1}dy - \ex{\Lambda_n(0,\gamma)^{-1}}}\to 0, n\to\infty,
$$
by virtue of the Borel--Cantelli lemma, concluding the proof. \end{proof}

\begin{proof}[Proof of Theorem~\ref{thm:main-poisson}]
Since $\gamma$ is independent of $X$, it suffices to show the quenched weak convergence, i.e.\ that 
the required weak convergence holds for almost all fixed realizations of $\gamma$. This, in turn, boils down to verifying  the assumptions B3, B4 for $\Lambda$ and A2 for $f = g/\Lambda$. The former follow from Proposition~\ref{prop:b3b4}. Concerning the latter,  note that 
\begin{gather*}
\ex{\norm{\frac{g}\Lambda}_{L^1(\R)}} = \ex{\int_{-\infty}^\infty\frac{\abs{g(x)}}{\Lambda(x,\gamma)} dx } = \int_{-\infty}^\infty \abs{g(x)}\ex{e^{E_\phi(x,\gamma)}}dx\\
= \int_{-\infty}^\infty \abs{g(x)}\ex{e^{E_\phi(x,\gamma)}}dx = \int_{-\infty}^\infty \abs{g(x)}dx\cdot  \exp\set{\int_{-\infty}^\infty \bigl(e^{\phi(y)}-1\bigr)dy}<\infty.
\end{gather*}
Consequently, $g/\Lambda \in L^1(\R)$ a.s.
Similarly, if $g\in L^2(\R)$, then
\begin{gather*}
\ex{\norm{\frac{g}\Lambda}^2_{L^2(\R)}} = \int_{-\infty}^\infty g(x)^2\ex{e^{2E_\phi(x,\gamma)}}dx =  \int_{-\infty}^\infty g(x)^2 dx\cdot  \exp\set{\int_{-\infty}^\infty \bigl(e^{2\phi(y)}-1\bigr)dy}<\infty
\end{gather*}
and $g/\Lambda\in L^2(\R)$ a.s.; if $|g(x)|\le C(1+|x|^\eps)^{-1}$, then $g/\Lambda$ is bounded thanks to B3. Consequently, B3, B4, and A2 hold for almost all $\gamma$, which implies the required quenched weak convergence.
\end{proof}
\end{document}